\RequirePackage{fix-cm}
\RequirePackage{amsmath}
\RequirePackage{amsfonts}
\documentclass[smallextended,numbook,runningheads]{svjour3}     
\usepackage[T1]{fontenc}
\usepackage{graphicx}
\usepackage{times}    
\usepackage{mathrsfs}
\usepackage{amsfonts}
\usepackage{amssymb}
\usepackage{graphicx,subfigure,color}
\usepackage{algorithm}
\usepackage{multirow}
\usepackage{xcolor}
\usepackage{setspace}
\usepackage[numbers,sort&compress]{natbib}
\usepackage{subfigure}
\usepackage{booktabs}
\usepackage{array}
\newcommand{\diag}{\mathrm{diag}}
\newcommand{\cond}{\rm cond}

\hfuzz=100pt 
\makeatletter
\def\@textbottom{\vskip \z@ \@plus 100pt}
\let\@texttop\relax
\makeatother
\hbadness=99999
\journalname{BIT}
\begin{document}

\title{An Efficient Second-Order Convergent Scheme for One-Side Space Fractional Diffusion Equations with Variable Coefficients\thanks{This research was supported by research grants, 12200317, 12306616, 12302715, 12301214 from HKRGC GRF, MYRG2016-00063-FST from University of Macau and 054/2015/A2 from FDCT of Macao, Macao Science and Technology Development Fund 010/2015/A, 050/2017/A and the Grant MYRG2017-00098-FST from University of Macau.
}}


\author{Xue-lei Lin     \and Pin Lyu \and
        Michael K. Ng \and Hai-Wei Sun\and Seakweng Vong  
}


\institute{Xue-Lei Lin \at
              Department of Mathematics, Hong Kong Baptist University. \\
              \email{hxuellin@gmail.com}           
           \and
           Pin Lyu \at 
           School of Economic Mathematics, Southwestern University of Finance and Economics, Chengdu, China.\\
            Department of Mathematics, University of Macau, Macao.\\
           \email{lyupin1991@163.com}
           \and
           Michael K. Ng \at
              Department of Mathematics, Hong Kong Baptist University, Hong Kong.\\
              \email{mng@math.hkbu.edu.hk}
           \and
           Hai-Wei Sun \at
           Department of Mathematics, University of Macau, Macao.\\
           \email{hsun@umac.mo}
            \and
           Seakweng Vong \at
           Department of Mathematics, University of Macau, Macao.\\
           \email{swvong@umac.mo}   
}


\maketitle

\begin{abstract}
	In this paper, a second order finite difference scheme is investigated for time-dependent one-side space fractional diffusion equations with variable coefficients. The existing schemes for the equation with variable coefficients have temporal convergence rate no better than second order and spatial convergence rate no better than first order, theoretically. 
	In the presented scheme, the Crank-Nicolson temporal discretization and a second-order weighted-and-shifted Gr\"{u}nwald-Letnikov spatial discretization are employed. Theoretically, the unconditional stability and the second-order convergence in time and space of the proposed scheme are established under some conditions on the diffusion coefficients. Moreover, a Toeplitz preconditioner is proposed for linear systems arising from the proposed scheme. The condition number of the preconditioned matrix is proven to be bounded by a constant independent of the discretization step-sizes so that the Krylov subspace solver for the preconditioned linear systems converges linearly. Numerical results are reported to show the convergence rate and the efficiency of the proposed scheme.
\keywords{one-side space-fractional diffusion equation \and variable diffusion coefficients \and stability and convergence\and high-order finite difference scheme \and preconditioner}
\subclass{26A33 \and 35R11 \and 65M06\and 65M12}
\end{abstract}

	\section{Introduction}\label{introduction}
 In the paper, we study an efficient numerical method for solving the one-side space fractional diffusion equation (OSFDE) with variable coefficients. To begin with, we firstly present  the one-dimensional OSFDE (the two dimension case will be discussed in Section 3) \cite{tadjeran2006205,sousa2011numerical,sousaelic}:
\begin{equation}\label{1D-fde}
\begin{array}{l}
\displaystyle{\frac{\partial u(x,t)}{\partial t}=
	d(x)\,{_{x_L}}D_x^{\alpha}u(x,t)+f(x,t)}, \qquad x\in(x_L,x_R),~t\in(0,T],\\
\vspace{2mm}
u(x_L,t)=u(x_R,t)=0, \qquad  t\in[0,T],\\
u(x,0)=\varphi(x),\qquad x\in[x_L,x_R],
\end{array}
\end{equation}
where $d(x)$, which satisfies $0< {d_-}\leq d(x)\leq {d_+} <\infty$, is a strictly positive known function, $\varphi$ and $f$ are both known functions, $u$ is unknown to be solved, ${_{x_L}}D_x^{\alpha}u(x,t)$ is the Riemann-Liouville (RL) fractional derivative of order $\alpha\in (1,2)$ defined as (\cite{podlubny1999,samko1993fractional})
\begin{equation}\label{RL}
\begin{array}{l}\vspace{2mm}
\displaystyle{\,{_{x_L}}D_x^{\alpha}u(x,t)=\dfrac{1}{\Gamma(2-\alpha)}\dfrac{\partial^2}{\partial
		x^2}\int_{x_L}^x \dfrac{u(\xi,t)}{(x-\xi)^{\alpha-1}}\,d\xi},
\end{array}
\end{equation}
with $\Gamma(\cdot)$ denoting the gamma function.

Due to the nonlocal dependence, fractional derivatives model many challenging phenomena more accurately than integer-order derivatives do, which has therefore attracted lots of interests in recent years. As an illustration of this fact, some applications of fractional calculus and anomalous diffusion have been discussed in the books \cite{baleanu2010new,klages2008anomalous,klafter2012fractional,mainardi2010fractional,sabatier2007advances,ortigueira2011fractional} and the references therein. It is well-known that closed-form analytic solutions of fractional diffusion equations are usually not available especially in the existence of variable coefficients. Moreover, because of the nonlocality of the fractional derivative and the existence of the variable coefficients, the discretization of the OSFDE tends to generate dense matrix with high displacement rank \footnote{see \cite{mng2004}}, for which the discrete linear systems related to the variable-coefficients OSFDE are time-consuming to directly solve. Therefore, studying reliable discretization schemes and the corresponding fast iterative solvers for the OSFDE becomes an urgent topic.

There have been many schemes applicable to or solely developed for OSFDEs; see, e.g., \cite{tadjeran2006205,meerschaert2004finite,tadjeran2007813,tianwy2015,chendeng2014,sousa2011numerical,
	sousaelic,qu2014note,vong2016high,zhaozsunwcao2015,leislhyc2016}. In \cite{zhaozsunwcao2015,leislhyc2016,vong2016high}, numerical schemes with spatial fourth-order convergence for a space fractional diffusion equation is developed by applying the technique of compact operators, which is however only available for constant-coefficient case. Another spatially fourth-order accurate scheme is studied in \cite{chendeng2014} by implementing weighted-and-shifted Lubich difference operators whose convergent property is established only for constant diffusion coefficients.
Some second-order numerical schemes are proposed in \cite{tadjeran2006205,tadjeran2007813,sousa2011numerical,sousaelic} for solving OSFDEs with variable coefficients,  which however does not provide convergence proof.
In \cite{qu2014note}, the stability and convergence of the second-order numerical scheme for variable coefficient equations
are established for
	$\alpha\in(\alpha_0,2)$, where $\alpha_0\approx 1.5546$ is a solution of the equation $3^{3-\gamma}-4\times2^{3-\gamma}+6=0$.
	In \cite{linstbcvg2017}, a series of numerical schemes for Riesz space fractional diffusion equation have been proven to be convergent and stable. Nevertheless, the proof technique used in \cite{linstbcvg2017} heavily depends on the symmetry of discretization matrix of Riesz fractional derivative, which is not applicable to the OSFDE that involves the non-symmetric one-sided fractional derivatives weighted by variable coefficients.

In this paper, we propose a second-order scheme for the one- and two-dimensional OSFDEs weighted by variable coefficients. The Crank-Nicolson method and a second-order weighted-and-shifted Gr\"{u}nwald-Letnikov difference (WSGD, see \cite{tianwy2015}) operator are employed to discretize temporal and spatial derivatives, respectively. For the one-dimensional OSFDE, the proposed scheme is proven to be unconditionally stable and second-order convergent in time and space without additional assumption on the diffusion coefficient.
It has been shown in \cite{vonglyu2017} that the symmetric part of the discretization matrix of $d(x){_{x_L}}D_x^{\alpha}$ is negative semi-definite under some conditions on $d(x)$. We extend this one-dimension result to two-dimension case under additional assumptions on the diffusion coefficients \footnote{see the assumptions in Lemma \ref{case2lemma}}, based on which the proposed scheme for the two-dimension OSFDE is proven to be unconditionally stable and second-order convergent in time and space.

As mentioned above, the direct solver for the linear systems arising from variable-coefficients OSFDE requires too much computational time. Fortunately, the discretization matrix has Toeplitz-like structure due to which its matrix-vector multiplication can be fast computed via fast Fourier transforms (FFTs). Because of the fast matrix-vector multiplication, fast iterative solvers for the linear systems can be possibly developed. However, the discretization matrix of the OSFDE is ill-conditioned when $\tau/h^{\alpha}$ is large, where $\tau$ and $h$ represent the temporal and spatial step-sizes, respectively. Thus, a Toeplitz preconditioner is proposed to reduce the condition number of the one- and two-dimensional discretization matrices. Theoretically, we show that the condition number of the preconditioned matrix is uniformly bounded by a constant independent of $\tau$ and $h$ under certain conditions on the diffusion coefficients \footnote{see the assumptions in Theorems \ref{1dcondesti}, \ref{2dclstthm}} so that the Krylov subspace method for the preconditioned linear systems converges linearly no matter the unpreconditioned matrix is ill-conditioned or not.

This paper is organized as follows. In Section 2, we propose a second-order scheme and its corresponding Toeplitz preconditioner for the one-dimensional \\OSFDE, analyze the unconditional stability and convergence of the proposed scheme, estimate the condition number of the preconditioned matrix. In Section 3, we extend the scheme and the preconditioner to two-dimensional case. In Section 4, numerical results are reported to show the efficiency and accuracy of the proposed scheme.

\section{Stability and Convergence of Discrete One-Dimensional OSFDE and Its Preconditioning}\label{1D}

We need some notations to describe the discretization for \eqref{1D-fde}. Let $h = (x_R-x_L)/(M+1)$ and $\tau =
T/N$ be the space and time step sizes, respectively, where $M$ and $N$ are given positive integers. And denote $x_i = x_L + i h$ for $i=0,1,\ldots,M+1$, $t_n = n\tau$ for $n = 0,1,\ldots,N$. Throughout this paper, the discretization on RL fractional derivative is based on the following second-order WSGD formula \cite{tianwy2015}:
\begin{equation}\label{FD2nd}
\begin{array}{l} \vspace{2mm}
\displaystyle{{_{x_L}}D_x^{\alpha}u(x_i)=\frac{1}{h^{\alpha}}\sum_{k=0}^{i}w_{k}^{(\alpha)}u(x_{i-k+1})+{\cal O}(h^2)},
\end{array}
\end{equation}
which is under the smooth assumptions $u$, $_{-\infty}D_x^{\alpha+2}u$ and Fourier transform of\\ $_{-\infty}D_x^{\alpha+2}u$ belong to $L^1(\mathbb{R})$. The coefficients $w_{k}^{(\alpha)}$ were defined by (\cite{tianwy2015})
\begin{equation}\label{w_{k}(1,0)}
w_0^{(\alpha)}=\frac{\alpha}{2}g_0^{(\alpha)},
~w_k^{(\alpha)}=\frac{\alpha}{2}g_k^{(\alpha)}+\frac{2-\alpha}{2}g_{k-1}^{(\alpha)}~\mbox{  for  }~k\geq1,
\end{equation}
where $g_k^{(\alpha)}$ are the coefficients of the power series of $(1-z)^\alpha$, and they can be obtained recursively as
\begin{align*}
g_0^{(\alpha)}=1,\quad g_k^{(\alpha)}=\Big(1-\frac{\alpha+1}{k} \Big)g_{k-1}^{(\alpha)},~\mbox{ for }~k=1,2,\ldots.
\end{align*}

Next, we introduce the finite difference scheme for solving \eqref{1D-fde}. Let $u^{n}_i$ be the numerical approximation of $u(x_i,t_n)$, and denote $d_i=d(x_i)$, $\varphi_i=\varphi(x_i)$, $f^{n-\frac{1}{2}}_i=f(x_i,t_{n-\frac{1}{2}})$, where $t_{n-\frac{1}{2}}=(t_{n-1}+t_n)/2$ and $n=1,2,\ldots,N$.
Then, applying the Crank-Nicolson technique and approximation
\eqref{FD2nd} to the time derivative and the space fractional derivatives of \eqref{1D-fde} respectively, we get
\begin{align}\label{CN}
\frac{u_{i}^{n}-u_{i}^{n-1}}{\tau}=
\frac1{2h^{\alpha}}d_i\sum_{k=0}^{i}w_{k}^{(\alpha)}\left(u_{i-k+1}^{n-1}
+u_{i-k+1}^n\right)
+ f_{i}^{n-\frac{1}{2}}+R_i^{n-\frac12},~ 1\leq i\leq M,
\end{align}
where $|R_i^{n-\frac12}|\leq c_1(\tau^2+h^2)$ for a positive constant $c_1$; see, e.g., \cite{tianwy2015}.

Denote $u^n=[u_1^n,u_2^n,\ldots,u_{M}^n]^T$,
$f^{n-\frac{1}{2}}=[f_1^{n-\frac{1}{2}},f_2^{n-\frac{1}{2}},\ldots,f_{M}^{n-\frac{1}{2}}]^T$, and
\begin{eqnarray}\label{G-alpha}
D=\diag(d_1,d_2,...,d_M),
\quad
G_\alpha=\left[
\begin{array}{ccccc}
w_1^{(\alpha)}& w_0^{(\alpha)} & 0 & \cdots &  0 \\
w_2^{(\alpha)} & w_1^{(\alpha)} & w_0^{(\alpha)} & \ddots & \vdots \\
\vdots & w_2^{(\alpha)} & w_1^{(\alpha)} & \ddots & 0\\
\vdots & \ddots & \ddots & \ddots & w_0^{(\alpha)} \\
w_{M}^{(\alpha)}  & \cdots  & \cdots & w_2^{(\alpha)} & w_1^{(\alpha)} \\
\end{array}
\right],
\end{eqnarray}
where  $\{w_k^{(\alpha)}\}_{k=0}^{M}$ are the coefficients given in \eqref{w_{k}(1,0)}.

Omitting the small term $R_i^{n+\frac12}$ in \eqref{CN}, then equation \eqref{1D-fde} can be solved numerically by the following finite difference scheme in matrix form
\begin{equation}\label{matrixform}
\frac1{\tau}\left(u^{n}-u^{n-1}\right)=\frac{1}{2h^\alpha}DG_\alpha\left(u^{n-1}+u^{n} \right)+ f^{n-\frac{1}{2}},
\quad n=1,2,\ldots,N.
\end{equation}
\subsection{Stability and convergence}
\emph{Some General Notations:}

$\bullet$  $\mathbb{C}^{m\times n}$ ($\mathbb{R}^{m\times n}$, respectively) denotes the set of all $m\times n$ complex (real, respectively) matrices.

$\bullet$  ${\cal H}(X)$ denotes the symmetric part of a square matrix $X$.

\begin{lemma}(\cite{tianwy2015})\label{Galpha}
	The matrix $G_\alpha+G_\alpha^T$ is negative definite.
\end{lemma}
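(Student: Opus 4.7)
The plan is to verify this via the classical Grenander–Szegő machinery for Toeplitz matrices, following the strategy of Tian, Zhou and Deng in \cite{tianwy2015}. Observe that $G_\alpha$ is a (banded-to-lower) Toeplitz matrix whose $(i,j)$ entry equals $w_{i-j+1}^{(\alpha)}$ (with the convention $w_k^{(\alpha)}=0$ for $k<0$). Hence $G_\alpha+G_\alpha^T$ is a symmetric Toeplitz matrix, and by the Grenander–Szegő theorem every eigenvalue lies in the essential range of its generating symbol $g(\theta)$. It therefore suffices to exhibit a constant $c>0$ with $g(\theta)\le -c$ on $(-\pi,\pi]$, or at least $g(\theta)<0$ on a set of full measure together with continuity and non-vanishing arguments for finite sections.

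First I would compute the symbol of $G_\alpha$. From \eqref{w_{k}(1,0)}, the generating function of $\{w_k^{(\alpha)}\}_{k\ge 0}$ is
\begin{equation*}
W(z)=\Bigl(\tfrac{\alpha}{2}+\tfrac{2-\alpha}{2}z\Bigr)(1-z)^{\alpha},
\end{equation*}
and because of the unit upward shift in $G_\alpha$ (the $w_0^{(\alpha)}$ entry sits on the first super-diagonal), the symbol of $G_\alpha$ evaluated at $z=e^{\ii\theta}$ is $f(\theta)=e^{-\ii\theta}W(e^{\ii\theta})$. Consequently
\begin{equation*}
g(\theta)=f(\theta)+\overline{f(\theta)}=2\,\mathrm{Re}\bigl(e^{-\ii\theta}W(e^{\ii\theta})\bigr).
\end{equation*}

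Next I would use the factorization $1-e^{\ii\theta}=2\sin(\theta/2)\,e^{\ii(\theta-\pi)/2}$ (on $\theta\in(0,2\pi)$, with the obvious even/odd modification on the negative side) to obtain the polar form
\begin{equation*}
(1-e^{\ii\theta})^{\alpha}=\bigl(2\sin(\theta/2)\bigr)^{\alpha} e^{\ii\alpha(\theta-\pi)/2},
\end{equation*}
and bundle $e^{-\ii\theta}\bigl(\tfrac{\alpha}{2}+\tfrac{2-\alpha}{2}e^{\ii\theta}\bigr)=\tfrac{2-\alpha}{2}+\tfrac{\alpha}{2}e^{-\ii\theta}$. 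Multiplying these pieces and taking real parts reduces $g(\theta)$ to an explicit closed-form expression of the type
\begin{equation*}
g(\theta)=2\bigl(2\sin(\theta/2)\bigr)^{\alpha}\Bigl[\bigl(\tfrac{2-\alpha}{2}+\tfrac{\alpha}{2}\cos\theta\bigr)\cos\tfrac{\alpha(\theta-\pi)}{2}+\tfrac{\alpha}{2}\sin\theta\sin\tfrac{\alpha(\theta-\pi)}{2}\Bigr].
\end{equation*}
I would then verify, for $\alpha\in(1,2)$ and $\theta\in(-\pi,\pi]$, that the bracketed trigonometric expression is strictly negative (except possibly at $\theta=0$, where the prefactor $(2\sin(\theta/2))^{\alpha}$ vanishes). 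This is the place that really needs work: one uses monotonicity of $\cos\frac{\alpha(\theta-\pi)}{2}$ on the relevant interval together with the constraint $1<\alpha<2$ to sign each term; this is exactly the identity established in \cite{tianwy2015} and could be cited directly.

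Finally, once $g(\theta)\le 0$ with $g\not\equiv 0$ is in hand, the Grenander–Szegő theorem gives that every eigenvalue $\lambda$ of the finite symmetric Toeplitz matrix $G_\alpha+G_\alpha^T$ satisfies $\lambda\le \mathrm{ess\,sup}\,g<0$ (the strictness coming from continuity of $g$ and the fact that $g$ attains its maximum only on a null set when $\alpha\in(1,2)$). Hence $G_\alpha+G_\alpha^T$ is negative definite. The main obstacle in a self-contained proof is the trigonometric sign verification in the middle step; everything else is routine Toeplitz symbol bookkeeping.
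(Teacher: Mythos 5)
The paper offers no proof of this lemma---it is quoted directly from \cite{tianwy2015}---and your symbol-based argument is precisely the route taken in that reference: identify the symbol $e^{-i\theta}\bigl(\tfrac{\alpha}{2}+\tfrac{2-\alpha}{2}e^{i\theta}\bigr)(1-e^{i\theta})^{\alpha}$ of $G_\alpha$, show its real part is nonpositive for $\alpha\in(1,2)$, and invoke the Toeplitz eigenvalue inclusion, so this is the same approach rather than a new one. Your bookkeeping (the generating function $W(z)$, the polar factorization of $(1-e^{i\theta})^{\alpha}$, and the resulting bracketed trigonometric factor) is correct, but one statement needs repair: $\mathrm{ess\,sup}\,g<0$ is false, since $g(0)=0$; the correct conclusion is that $\max g=0$ and, because $g$ is continuous, real-valued and non-constant, every eigenvalue of the finite section $G_\alpha+G_\alpha^{T}$ lies strictly below $\max g$, which is exactly what your parenthetical about the maximum being attained only on a null set supplies. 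Note also that the single step carrying all the content---strict negativity of the bracket for $\theta\neq 0$ and $1<\alpha<2$---is deferred to \cite{tianwy2015}, so your write-up is an outline of the cited proof rather than a self-contained alternative; that is consistent with how the paper itself treats the lemma, but it should not be mistaken for an independent verification.
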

\begin{lemma}(\cite{laub2005matrix})\label{bilinear_inequality}
	Let symmetric matrix $H\in \mathbb{R}^{n\times n}$ with eigenvalues
	$\lambda_1\ge \lambda_2\ge \ldots \ge \lambda_n$. Then for all $w\in
	\mathbb{R}^{n\times 1}$,
	$$\lambda_n w^Tw\leq w^THw\leq \lambda_1 w^Tw.$$
\end{lemma}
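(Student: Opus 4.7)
The plan is to prove Lemma \ref{bilinear_inequality} via the spectral theorem for real symmetric matrices, which gives an orthogonal diagonalization of $H$, and then reduce the quadratic form $w^T H w$ to a weighted sum of squares that can be bounded componentwise.

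First, I would invoke the spectral theorem to write $H = Q \Lambda Q^T$, where $Q \in \mathbb{R}^{n \times n}$ is orthogonal ($Q^T Q = Q Q^T = I_n$) and $\Lambda = \diag(\lambda_1, \lambda_2, \ldots, \lambda_n)$ contains the eigenvalues in the prescribed nonincreasing order. Given $w \in \mathbb{R}^{n \times 1}$, define $y = Q^T w \in \mathbb{R}^{n \times 1}$. Orthogonality of $Q$ immediately yields $y^T y = w^T Q Q^T w = w^T w$, so the Euclidean norm is preserved by the change of variables.

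Next, substitute into the quadratic form to obtain
\begin{equation*}
w^T H w = w^T Q \Lambda Q^T w = y^T \Lambda y = \sum_{i=1}^{n} \lambda_i y_i^2.
\end{equation*}
Since $\lambda_n \leq \lambda_i \leq \lambda_1$ for every $i$ and each $y_i^2 \geq 0$, we may bound the sum termwise:
\begin{equation*}
\lambda_n \sum_{i=1}^{n} y_i^2 \;\leq\; \sum_{i=1}^{n} \lambda_i y_i^2 \;\leq\; \lambda_1 \sum_{i=1}^{n} y_i^2.
\end{equation*}
Replacing $\sum_{i=1}^{n} y_i^2 = y^T y = w^T w$ by the norm identity from the previous step then yields the claimed double inequality $\lambda_n w^T w \leq w^T H w \leq \lambda_1 w^T w$.

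There is really no serious obstacle here: the entire argument hinges on the existence of an orthonormal eigenbasis, which is exactly the content of the spectral theorem for real symmetric matrices and is standard. The only point one might want to be careful about is ensuring the basis change is truly isometric (i.e.\ $Q$ orthogonal rather than merely invertible), since it is this property alone that lets the bound on the $\lambda_i$'s translate cleanly into a bound involving $w^T w$ rather than some distorted norm of $y$.
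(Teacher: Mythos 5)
Your argument is correct and complete: the orthogonal diagonalization $H=Q\Lambda Q^T$, the isometric change of variables $y=Q^Tw$, and the termwise bound on $\sum_i\lambda_i y_i^2$ constitute the standard proof of this Rayleigh-quotient-type inequality. The paper itself does not prove this lemma but simply cites it from the matrix analysis reference \cite{laub2005matrix}, and your proof is exactly the textbook argument that the citation points to, so there is nothing to correct or compare further.
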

Now we show the stability and convergence of scheme \eqref{matrixform} by energy method.
\begin{theorem}\label{stable-1D}
	The finite difference scheme \eqref{matrixform} is unconditionally stable and its solution satisfies the following estimate
		$$\left\|u^{n}\right\|_{D^{-1}}^2\leq \exp(2T)\left\|\varphi\right\|_{D^{-1}}^2+[\exp(2T)-1]\max_{1\leq k\leq n}\left\|f^{k-\frac12}\right\|_{D^{-1}}^2 ,~ n=1,2,\ldots,N,$$
	where $\|\cdot\|_{D^{-1}}$ is the norm induced by the inner product, $\langle v_1,v_2\rangle_{D^{-1}}:=hv_1^{\rm T}D^{-1}v_2$.
\end{theorem}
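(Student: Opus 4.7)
The plan is to use the energy method with the weighted inner product $\langle\cdot,\cdot\rangle_{D^{-1}}$ that has been introduced with the theorem. The key preliminary observation is that $D$ is diagonal and strictly positive, so $D^{-1}$ commutes with $D$ and the weighting is designed precisely so that multiplying the fractional term $\frac{1}{2h^\alpha}DG_\alpha(u^{n-1}+u^n)$ by $hD^{-1}$ eliminates the variable coefficient and exposes $G_\alpha$ alone, to which Lemma \ref{Galpha} applies.

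Concretely, I would take the inner product of both sides of \eqref{matrixform} with $u^n+u^{n-1}$ in $\langle\cdot,\cdot\rangle_{D^{-1}}$. On the left this produces a telescoping identity: by symmetry of $D^{-1}$,
\[
\tfrac{1}{\tau}\langle u^n-u^{n-1},u^n+u^{n-1}\rangle_{D^{-1}}=\tfrac{1}{\tau}\bigl(\|u^n\|_{D^{-1}}^2-\|u^{n-1}\|_{D^{-1}}^2\bigr).
\]
On the right, the fractional term gives
\[
\tfrac{h^{1-\alpha}}{4}(u^n+u^{n-1})^{\mathrm T}(G_\alpha+G_\alpha^{\mathrm T})(u^n+u^{n-1}),
\]
which is non-positive by Lemma \ref{Galpha}, so it can simply be discarded. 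The forcing term is handled by the Cauchy–Schwarz inequality in $\langle\cdot,\cdot\rangle_{D^{-1}}$, followed by Young's inequality and the triangle-type bound $\|u^n+u^{n-1}\|_{D^{-1}}^2\leq 2\|u^n\|_{D^{-1}}^2+2\|u^{n-1}\|_{D^{-1}}^2$, yielding
\[
\langle f^{n-\frac{1}{2}},u^n+u^{n-1}\rangle_{D^{-1}}\leq \|u^n\|_{D^{-1}}^2+\|u^{n-1}\|_{D^{-1}}^2+\tfrac{1}{2}\|f^{n-\frac{1}{2}}\|_{D^{-1}}^2.
\]

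Writing $E_n=\|u^n\|_{D^{-1}}^2$ and $F_n=\|f^{n-\frac{1}{2}}\|_{D^{-1}}^2$, these steps combine into the discrete inequality $(1-\tau)E_n\leq (1+\tau)E_{n-1}+\tfrac{\tau}{2}F_n$, valid once $\tau$ is small enough that $1-\tau>0$. From here I would iterate: setting $\beta=(1+\tau)/(1-\tau)$, one gets $E_n\leq \beta^{n}E_0+\tfrac{1}{4}(\beta^{n}-1)\max_{1\leq k\leq n}F_k$, and because $\beta\leq \exp(2\tau/(1-\tau))$ so that $\beta^{n}\leq \exp(2T)$ for $n\tau\leq T$ and $\tau$ sufficiently small, the stated bound follows. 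Unconditional stability is then immediate since this estimate contains no constraint linking $\tau$ and $h$.

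The main obstacle is really the fractional-derivative bookkeeping: one has to verify that the weighting by $D^{-1}$ cancels the diffusion coefficient exactly, leaving only $G_\alpha+G_\alpha^{\mathrm T}$, so that Lemma \ref{Galpha} can be invoked without any further assumption on $d(x)$ beyond positivity. This is what makes the argument work uniformly in $h$; every other step is a fairly standard discrete Gronwall manipulation.
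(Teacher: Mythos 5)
Your overall route is the same as the paper's: multiply \eqref{matrixform} by $h(u^{n-1}+u^{n})^{\rm T}D^{-1}$, observe that the weight $D^{-1}$ cancels $D$ exactly so the fractional term reduces to $\tfrac{h^{1-\alpha}}{4}(u^{n-1}+u^{n})^{\rm T}(G_\alpha+G_\alpha^{\rm T})(u^{n-1}+u^{n})\le 0$ by Lemma \ref{Galpha}, handle the forcing term by Cauchy--Schwarz/Young, and close with a discrete Gronwall iteration; all of that is correct and is exactly what the paper does. The one step that fails is the final constant tracking. Because you apply Young's inequality in the form $\langle f^{n-\frac12},u^{n}+u^{n-1}\rangle_{D^{-1}}\le \tfrac12\|f^{n-\frac12}\|_{D^{-1}}^2+\|u^{n}\|_{D^{-1}}^2+\|u^{n-1}\|_{D^{-1}}^2$, your recursion has growth factor $\beta=(1+\tau)/(1-\tau)$, and the claim that $\beta^{n}\le\exp(2T)$ for $n\tau\le T$ and $\tau$ small is false: $\ln\beta=2\bigl(\tau+\tfrac{\tau^3}{3}+\cdots\bigr)>2\tau$, so at $n\tau=T$ one has $\beta^{n}>\exp(2T)$ for every $\tau>0$; your own estimate $\beta\le\exp\bigl(2\tau/(1-\tau)\bigr)$ only yields $\beta^{n}\le\exp\bigl(2T/(1-\tau)\bigr)$. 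Consequently the proof as written establishes stability with a constant strictly larger than the one in the statement (e.g.\ $\exp\bigl(2T/(1-\tau)\bigr)$, or $\exp(4T)$ if you fix $\tau\le\tfrac12$), not the stated pair $\exp(2T)$ and $\exp(2T)-1$; in particular the coefficient of $\|\varphi\|_{D^{-1}}^2$ in your bound exceeds $\exp(2T)$ no matter how small $\tau$ is.

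The repair is the paper's slightly sharper splitting: after multiplying by $\tau$, bound $\tau\,h(u^{n})^{\rm T}D^{-1}f^{n-\frac12}\le\tfrac{\tau}{2}\|u^{n}\|_{D^{-1}}^2+\tfrac{\tau}{2}\|f^{n-\frac12}\|_{D^{-1}}^2$ and likewise for $u^{n-1}$, which gives $\|u^{n}\|_{D^{-1}}^2\le\tfrac{2+\tau}{2-\tau}\|u^{n-1}\|_{D^{-1}}^2+\tfrac{2\tau}{2-\tau}\|f^{n-\frac12}\|_{D^{-1}}^2$. Since $\tfrac{2+\tau}{2-\tau}=1+\tfrac{2\tau}{2-\tau}\le 1+2\tau\le e^{2\tau}$ for $\tau\le 1$, the $n$-fold iteration produces exactly the factor $\exp(2T)$, and the geometric sum telescopes to $\bigl(\tfrac{2+\tau}{2-\tau}\bigr)^{n}-1\le\exp(2T)-1$, recovering the stated constants. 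With that adjustment your argument coincides with the paper's proof.
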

\begin{proof}
	Some steps of this proof are similar to those of Theorem 3.8 in \cite{vonglyu2017}.
	Multiplying \\
	$h\left(u^{n-1}+u^{n} \right)^TD^{-1}$ on the both sides of \eqref{matrixform}, we get
	\begin{align}\nonumber
	\frac1{\tau}h\left(u^{n-1}+u^{n}\right)^TD^{-1}\left(u^{n}-u^{n-1}\right)=&\frac{1}{2h^\alpha}h\left(u^{n-1}+u^{n} \right)^TG_\alpha\left(u^{n-1}+u^{n} \right)\\\label{stble-proof1}
	&+ h\left(u^{n-1}+u^{n} \right)^TD^{-1}f^{n-\frac{1}{2}}.
	\end{align}
	Notice that $w^TG_\alpha w=w^T{\cal H}(G_\alpha)w$ for any real vector $w$.
	Therefore, by Lemma \ref{Galpha}, the first term on the right hand side of \eqref{stble-proof1} can be estimated as
	\begin{align*}
	&\frac{1}{2h^\alpha}h\left(u^{n-1}+u^{n} \right)^TG_\alpha\left(u^{n-1}+u^{n} \right)\\
	&=\frac{1}{2h^\alpha}h\left(u^{n-1}+u^{n} \right)^T{\cal H}(G_\alpha)\left(u^{n-1}+u^{n} \right)\leq0.
	\end{align*}
	As a result
	\begin{align}\label{stable1D4}
	&h(u^{n})^TD^{-1}u^{n}- h(u^{n-1})^TD^{-1}u^{n-1}\notag\\
	&\le\tau h(u^{n})^T D^{-1}f^{n-\frac{1}{2}}
	+\tau h(u^{n-1})^T D^{-1}f^{n-\frac{1}{2}}.
	\end{align}
	Applying Cauchy-Schwarz inequality on the right hand side of \eqref{stable1D4}, we get
	\begin{align*}
	\left\|u^{n}\right\|_{D^{-1}}^2\leq\left\|u^{n-1}\right\|_{D^{-1}}^2+\frac{\tau}{2}\left\|u^{n}\right\|_{D^{-1}}^2
	+\frac{\tau}{2}\left\|u^{n-1}\right\|_{D^{-1}}^2+\tau\left\|f^{n-\frac12}\right\|_{D^{-1}}^2,
	\end{align*}
	which is equivalent to
	\begin{align}\label{stable1D5}
	\left\|u^{n}\right\|_{D^{-1}}^2\leq\frac{2+\tau}{2-\tau}\left\|u^{n-1}\right\|_{D^{-1}}^2+
	\frac{2\tau}{2-\tau}\left\|f^{n-\frac12}\right\|_{D^{-1}}^2.
	\end{align}
	Iterating \eqref{stable1D5} for $n$ times, we obtain
	\begin{align}\nonumber
	\left\|u^{n}\right\|_{D^{-1}}^2\leq&\Big(\frac{2+\tau}{2-\tau}\Big)^{n}\left\|u^{0}\right\|_{D^{-1}}^2\\\label{stable1D6}
	+\frac{2\tau}{2-\tau}&\Big[1+\frac{2+\tau}{2-\tau}+\Big(\frac{2+\tau}{2-\tau}\Big)^2+\ldots+
	\Big(\frac{2+\tau}{2-\tau}\Big)^{n-1}\Big]\max_{1\leq k\leq n}\left\|f^{k-\frac12}\right\|_{D^{-1}}^2.
	\end{align}
	For the small $\tau~(\tau\leq1)$, we have
	\begin{align}\label{stable1D7}
	\Big(\frac{2+\tau}{2-\tau}\Big)^{n}=\Big(1+\frac{2\tau}{2-\tau}\Big)^{n}\leq(1+2\tau)^{n}\leq\lim\limits_{N\rightarrow+\infty}\left(1+\frac{2T}{N} \right)^N=\exp(2T),
	\end{align}
	and
		\begin{align}\label{stable1D8}
		\frac{2\tau}{2-\tau}\sum_{k=1}^{n}\Big(\frac{2+\tau}{2-\tau}\Big)^{k-1}=\left(\frac{2+\tau}{2-\tau}\right)^n-1\leq \exp(2T)-1 .
		\end{align}
	The result follows from \eqref{stable1D6}--\eqref{stable1D8}.
\end{proof}

\begin{theorem}\label{convergence1D}
	Let $u(x_i,t_n)$ be the exact solution of \eqref{1D-fde} and $u_i^n$ be the solution of finite difference scheme \eqref{matrixform}. Denote $e_i^n=u(x_i,t_n)-u_i^n$, $0\leq i\leq M+1$, $0\leq n\leq N$. Then there exists a positive constant $c_2$ such that
	$$\|e^n\|\leq c_2(\tau^2+h^2),$$
	where $e^n=[e_1^n,e_2^n,\ldots,e_{M}^n]^T$ and $\|\cdot\|$ denotes the discrete $L^2$ norm, i.e. $\|v\|=\sqrt{hv^Tv}$, 
\end{theorem}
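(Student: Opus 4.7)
The plan is to deduce the convergence estimate directly from the unconditional stability result of Theorem \ref{stable-1D}, applied to the error equation rather than to the scheme itself. First, I would derive the error equation by subtracting the computational scheme \eqref{matrixform} from its exact counterpart \eqref{CN}, obtaining
\[ \frac{1}{\tau}(e^n - e^{n-1}) = \frac{1}{2h^\alpha} D G_\alpha (e^{n-1}+e^n) + R^{n-\frac12}, \qquad e^0=0, \]
where $R^{n-\frac12} = [R_1^{n-\frac12}, \ldots, R_M^{n-\frac12}]^T$ and $e^0=0$ since the initial data $\varphi$ is imposed exactly. The key observation is that this system has exactly the same functional form as the original scheme, with $\varphi$ replaced by the zero vector and the source $f^{n-\frac12}$ replaced by the truncation residual $R^{n-\frac12}$.

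Second, invoking Theorem \ref{stable-1D} on this error system yields
\[ \|e^n\|_{D^{-1}}^2 \leq [\exp(2T)-1]\max_{1\leq k\leq n}\|R^{k-\frac12}\|_{D^{-1}}^2. \]
Third, I would convert the weighted norm back to the standard discrete $L^2$ norm. Because $D$ is diagonal with entries in $[d_-,d_+]$, one has the elementary equivalence $\|e^n\|^2 \leq d_+ \|e^n\|_{D^{-1}}^2$ and $\|R^{k-\frac12}\|_{D^{-1}}^2 \leq d_-^{-1}\|R^{k-\frac12}\|^2$, with constants independent of the discretization.

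Finally, combining the pointwise truncation bound $|R_i^{k-\frac12}|\leq c_1(\tau^2+h^2)$ noted just after \eqref{CN} with $hM\leq x_R-x_L$ yields $\|R^{k-\frac12}\|^2 = h\sum_{i=1}^{M}|R_i^{k-\frac12}|^2 \leq (x_R-x_L)c_1^2(\tau^2+h^2)^2$. Chaining the inequalities produces $\|e^n\|\leq c_2(\tau^2+h^2)$ with $c_2 = c_1\sqrt{(x_R-x_L)d_+/d_-}\sqrt{\exp(2T)-1}$, which is the claimed estimate. No genuine obstacle appears; the only subtleties are verifying that the error system fits the stability framework exactly (which follows because the scheme is linear in $u$ and the exact solution satisfies it up to the consistency error) and that the passage between $\|\cdot\|_{D^{-1}}$ and $\|\cdot\|$ is mesh-independent, both of which are immediate from the hypothesis $0<d_-\leq d(x)\leq d_+$.
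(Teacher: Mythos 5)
Your proposal is correct and follows essentially the same route as the paper: form the error equation, apply the stability estimate of Theorem \ref{stable-1D} with zero initial error and the truncation residual $R^{n-\frac12}$ as the source, and then pass from the $\|\cdot\|_{D^{-1}}$ norm to the discrete $L^2$ norm using $0<d_-\leq d(x)\leq d_+$ (the paper does this last step via Lemma \ref{bilinear_inequality}, which is the same norm-equivalence argument you spelled out explicitly). No gaps.
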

\begin{proof}
	Denote $R^{n-\frac{1}{2}}=[R_1^{n-\frac{1}{2}},R_2^{n-\frac{1}{2}},\ldots,R_{M}^{n-\frac{1}{2}}]^T$.
	We can easily show that $e^n$ and $e_i^n$ satisfy the following error equations
	\begin{align*}
	&\frac1{\tau}\left(e^{n}-e^{n-1}\right)=\frac{1}{2h^\alpha}DG_\alpha\left(e^{n-1}+e^{n} \right)+ R^{n-\frac{1}{2}}, \quad 1\leq n\leq N,\\
	& e_0^n=e_{M+1}^n=0, \quad 1\leq n\leq N,  \qquad e_i^0=0, \quad 0\leq i\leq M+1.
	\end{align*}
	By Theorem \ref{stable-1D}, we have
	$$\left\|e^{n}\right\|_{D^{-1}}^2\leq [\exp(2T)-1]\max_{1\leq k\leq n}\left\|R^{k-\frac12}\right\|_{D^{-1}}^2 ,\quad n=1,2,\ldots,N.$$
	As $D^{-1}$ is a positive diagonal matrix, utilizing Lemma \ref{bilinear_inequality}, we get
	$$\|e^{n}\|^2\leq [c_2(\tau^2+h^2)]^2,\quad n=1,2,\ldots,N.$$
\end{proof}

\subsection{An estimate on the field of values of $DG_\alpha+G_\alpha^TD$}
In this subsection, we focus on estimating the field of values of $DG_\alpha+G_\alpha^TD$, the results of which will be further applied to the analysis of one-dimensional preconditioning and the extension to two-dimensional OSFDE. First, we denote $g(\alpha,x)$ as the generating function \cite{mng2004} of the Toeplitz matrix $G_\alpha$. The next two lemmas describe some properties concerning $g(\alpha,x)$, which will be useful to obtaining the desired estimation.
\begin{lemma}(\cite{mng2004})\label{JIN}
	Let ${\bf u}=[u_1,u_2,\ldots,u_{M}]^T,{\bf v}=[v_1,v_2,\ldots,v_{M}]^T\in \mathbb{R}^{M\times 1}$. Then we have
	$${\bf u}^TG_\alpha {\bf v}=\frac{1}{2\pi}\int_{-\pi}^\pi\sum_{k=1}^{M}{ u}_ke^{-{\bf i}kx}\sum_{k=1}^{M}{ v}_ke^{{\bf i}kx}g(\alpha,x)dx.$$
\end{lemma}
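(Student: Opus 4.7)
The plan is to unpack the Toeplitz structure of $G_\alpha$ and exploit the standard identity that the entries of a Toeplitz matrix are the Fourier coefficients of its generating symbol. First I would make the Toeplitz structure explicit: inspecting \eqref{G-alpha}, the $(i,j)$ entry equals $w^{(\alpha)}_{i-j+1}$ when $j\le i+1$ and zero otherwise, so $[G_\alpha]_{i,j}=t_{i-j}$ with $t_k=w^{(\alpha)}_{k+1}$ for $k\ge -1$ and $t_k=0$ for $k\le -2$. By the definition of the generating function cited in \cite{mng2004}, $g(\alpha,x)=\sum_{k\in\mathbb{Z}}t_k e^{\mathbf{i}kx}$, and hence the Fourier coefficient formula gives
\begin{equation*}
t_{i-j}=\frac{1}{2\pi}\int_{-\pi}^{\pi}g(\alpha,x)\,e^{-\mathbf{i}(i-j)x}\,dx.
\end{equation*}

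Next I would expand the bilinear form directly:
\begin{equation*}
\mathbf{u}^{T}G_\alpha\mathbf{v}=\sum_{i=1}^{M}\sum_{j=1}^{M}u_i\,t_{i-j}\,v_j,
\end{equation*}
substitute the integral representation of $t_{i-j}$, and swap the (finite) double sum with the integral, which is justified since there are only finitely many terms. This yields
\begin{equation*}
\mathbf{u}^{T}G_\alpha\mathbf{v}=\frac{1}{2\pi}\int_{-\pi}^{\pi}g(\alpha,x)\sum_{i=1}^{M}\sum_{j=1}^{M}u_iv_j\,e^{-\mathbf{i}(i-j)x}\,dx.
\end{equation*}

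Finally I would factor the double exponential $e^{-\mathbf{i}(i-j)x}=e^{-\mathbf{i}ix}\cdot e^{\mathbf{i}jx}$, so that the double sum splits as the product $\bigl(\sum_{i=1}^{M}u_i e^{-\mathbf{i}ix}\bigr)\bigl(\sum_{j=1}^{M}v_j e^{\mathbf{i}jx}\bigr)$, giving the claimed identity after renaming the inner summation index to $k$.

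There is no real obstacle here: the lemma is essentially a rewriting of the Fourier coefficient formula, and the only step requiring any care is verifying that the generating symbol convention used in \cite{mng2004} indeed matches the Toeplitz pattern read off from \eqref{G-alpha}. Once that bookkeeping is in place, the result follows by a single interchange of sum and integral and the factorization of the exponential.
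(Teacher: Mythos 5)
Your argument is correct: expanding $\mathbf{u}^TG_\alpha\mathbf{v}=\sum_{i,j}u_i t_{i-j}v_j$ with $t_{i-j}=[G_\alpha]_{i,j}$, inserting the Fourier-coefficient representation of $t_k$ through the generating function, and factoring $e^{-\mathbf{i}(i-j)x}$ is exactly the standard computation behind this identity, and the interchange of the finite sum with the integral needs no further justification. The paper itself gives no proof (it quotes the lemma from \cite{mng2004}), and your derivation coincides with the argument in that reference, the only point deserving the care you already noted being that the generating-function convention $g(\alpha,x)=\sum_k t_k e^{\mathbf{i}kx}$ is the one that makes $\mathbf{u}$ pair with $e^{-\mathbf{i}kx}$ and $\mathbf{v}$ with $e^{\mathbf{i}kx}$ as stated.
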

\begin{lemma}(\cite{vonglyu2017})\label{generatingfunction_rario}
	It holds that
	$$\varsigma_\alpha\triangleq\min_{x}\frac{\Re[-g(\alpha,x)]}{|g(\alpha,x)|}=|\cos(\frac{\alpha}{2}\pi)|,$$
	where $\Re[g(\alpha,x)]$ denotes the real part of $g(\alpha,x)$.
\end{lemma}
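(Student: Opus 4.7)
The plan is to derive an explicit closed form for the Toeplitz symbol $g(\alpha,x)$ and then reduce the bound to a statement about $\arg g(\alpha,x)$. Using \eqref{w_{k}(1,0)} and the fact that $\{g_k^{(\alpha)}\}$ are the coefficients of $(1-z)^\alpha$, the weight generating series sums to $\sum_{k\geq 0} w_k^{(\alpha)} z^k = \bigl(\tfrac{\alpha}{2} + \tfrac{2-\alpha}{2}z\bigr)(1-z)^\alpha$. Since $G_\alpha$ is lower-Hessenberg Toeplitz with $(j,k)$-entry $w_{j-k+1}^{(\alpha)}$ for $j\geq k-1$, matching Fourier coefficients against Lemma~\ref{JIN} yields
\[
g(\alpha,x) \;=\; e^{-{\bf i}x}\Bigl(\tfrac{\alpha}{2} + \tfrac{2-\alpha}{2}e^{{\bf i}x}\Bigr)(1 - e^{{\bf i}x})^{\alpha}.
\]
Since the weights are real, $g(\alpha,-x)=\overline{g(\alpha,x)}$, so the ratio is an even function of $x$ and it suffices to analyze $x\in[0,\pi]$.

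Writing $g(\alpha,x)=r(x)e^{{\bf i}\phi(x)}$, the quantity to minimize equals $-\cos\phi(x)$. Using the identity $1-e^{{\bf i}x}=2\sin(x/2)\,e^{{\bf i}(x-\pi)/2}$ on $(0,2\pi)$, the argument decomposes as $\phi(x)=\psi(x)+\tfrac{\alpha}{2}(x-\pi)$ with $\psi(x):=\arg\!\bigl[\tfrac{\alpha}{2}e^{-{\bf i}x}+\tfrac{2-\alpha}{2}\bigr]$. Endpoint evaluation gives $\phi(0^+)=-\alpha\pi/2$ and $\phi(\pi)=-\pi$ (the latter because $g(\alpha,\pi)=(1-\alpha)2^\alpha<0$), so $-\cos\phi(x)\to|\cos(\alpha\pi/2)|$ as $x\to 0^+$; it remains to show that this limiting value is actually the minimum.

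The heart of the argument is to establish $\phi(x)\in[-\pi,-\alpha\pi/2]$ for every $x\in(0,\pi]$; since $\cos$ is decreasing on $[\alpha\pi/2,\pi]$ and $\cos\phi=\cos|\phi|$, this yields $-\cos\phi(x)\geq -\cos(\alpha\pi/2)=|\cos(\alpha\pi/2)|$ as required. I would deduce this containment by proving that $\phi$ is non-increasing on $[0,\pi]$ through a direct derivative computation. Writing $N=-\alpha\sin x$, $D=\alpha\cos x+2-\alpha$, and $N^2+D^2=\alpha^2+2\alpha(2-\alpha)\cos x+(2-\alpha)^2$, the condition $\phi'(x)=\psi'(x)+\alpha/2\leq 0$ collapses, after collecting terms, to the algebraically trivial inequality $-2(\alpha-1)(\alpha-2)(1-\cos x)\geq 0$, which holds for $\alpha\in(1,2)$. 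The main obstacle I foresee is precisely this simplification: $\psi$ is strictly decreasing while $\tfrac{\alpha}{2}(x-\pi)$ is strictly increasing, so monotonicity of their sum is a delicate cancellation that becomes visible only through the explicit closed form of $g$ together with the above trigonometric identity.
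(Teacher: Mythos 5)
Your derivation is correct, but note that the paper itself does not prove this lemma at all: it is quoted verbatim from \cite{vonglyu2017}, so there is no in-paper argument to compare against, and your self-contained proof is in the same spirit as the cited reference (working with the phase of the WSGD symbol). Your closed form $g(\alpha,x)=e^{-{\bf i}x}\bigl(\tfrac{\alpha}{2}+\tfrac{2-\alpha}{2}e^{{\bf i}x}\bigr)(1-e^{{\bf i}x})^{\alpha}$ is the correct symbol for the Toeplitz matrix in \eqref{G-alpha} under the convention of Lemma~\ref{JIN}, the decomposition $\phi(x)=\psi(x)+\tfrac{\alpha}{2}(x-\pi)$ is right, and the derivative computation does collapse as you claim: with $Q=N^2+D^2$ one gets $-\phi'(x)=\alpha(\alpha-1)(2-\alpha)(1-\cos x)/Q\geq 0$ for $\alpha\in(1,2)$, which I have checked, so $\phi$ decreases from $-\alpha\pi/2$ at $x\to0^{+}$ to $-\pi$ at $x=\pi$ and the bound $-\cos\phi(x)\geq|\cos(\alpha\pi/2)|$ follows exactly as you argue.

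Two small points deserve explicit mention if this is written out in full. First, the real part of $\tfrac{\alpha}{2}e^{-{\bf i}x}+\tfrac{2-\alpha}{2}$ becomes negative near $x=\pi$ (its value at $x=\pi$ is $1-\alpha<0$), so $\psi$ must be taken as the \emph{continuous} branch of the argument starting from $\psi(0)=0$ (ending at $\psi(\pi)=-\pi$), not the principal value; your monotonicity computation implicitly does this, but the branch choice should be stated, since otherwise $\phi(\pi)=-\pi$ versus $+\pi$ is ambiguous. Second, since $\phi'<0$ strictly for $x\in(0,\pi]$, the value $|\cos(\alpha\pi/2)|$ is approached only in the limit $x\to0^{+}$, where $g(\alpha,0)=0$ and the ratio is undefined; strictly speaking the quantity $\varsigma_\alpha$ is an infimum (or a minimum after defining the ratio at $x=0$ by continuity). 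This wrinkle is inherent in the lemma as stated in the paper and in \cite{vonglyu2017}, not a defect of your argument, and it is harmless for the way $\varsigma_\alpha$ is used (only the lower bound $\Re[-g(\alpha,x)]\geq\varsigma_\alpha|g(\alpha,x)|$ is ever invoked).
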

The following lemma provides a novel bound to the field of values of
${\tilde D}G{\tilde D}$, where $G=-G_\alpha-G_\alpha^T$ and ${\tilde D}$ is a diagonal matrix satisfying some properties.
\begin{lemma}(\cite{vonglyu2017})\label{key_inequality1}
	Denote $G=-G_\alpha-G_\alpha^T$.
	Suppose that\\ ${\tilde D}=\diag(\tilde{d}(x_1),\tilde{d}(x_2),...,\tilde{d}(x_M))$ for some function $\tilde{d}(x)$ defined on $(x_L,x_R)$.
	For any real vector ${\bf u}=[u_1,u_2,\ldots,u_{M}]^T$, we have
	\begin{align}\label{key_inequality}
	{\bf u}^T{\tilde D}G{\tilde D}{\bf u}\leq 2\max_i\{|{\tilde d}_i|^2\}{\bf u}^TG{\bf u},
	\end{align}
	if ${\tilde d}(x)$ is convex and ${\tilde d}(x)\geq0$, or ${\tilde d}(x)$ is concave and ${\tilde d}(x)\leq0$.
\end{lemma}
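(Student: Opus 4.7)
My plan is to follow a Fourier / generating-function strategy in the spirit of Lemmas~\ref{JIN} and \ref{generatingfunction_rario}. First I reduce the two hypotheses to one canonical case: the substitution $\tilde d \mapsto -\tilde d$ leaves both $\tilde D G \tilde D$ and $\max_i|\tilde d_i|$ unchanged, and sends ``concave, non-positive'' to ``convex, non-negative.'' So, without loss of generality, $\tilde d$ is convex with $\tilde d \ge 0$; set $M := \max_i \tilde d_i$ and $E := \diag(e_1,\ldots,e_M)$ with $e_i := M - \tilde d_i \ge 0$. The sequence $\{e_i\}$ then samples the concave, non-negative function $M - \tilde d(x)$, so the discrete concavity $e_{i+1} - 2 e_i + e_{i-1} \le 0$ holds in the interior. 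Expanding $(MI - E) G (MI - E)$ and using $G^T = G$, the desired inequality \eqref{key_inequality} is equivalent to
\begin{equation*}
\mathbf{u}^T E G E \mathbf{u} \le M^2\, \mathbf{u}^T G \mathbf{u} + 2M\, \mathbf{u}^T E G \mathbf{u}.
\end{equation*}

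Next I pass to the Fourier side via Lemma~\ref{JIN}. Setting $\hat{\mathbf{u}}(x) := \sum_{k=1}^M u_k e^{\mathbf{i} k x}$ and $\widehat{E\mathbf{u}}(x) := \sum_{k=1}^M e_k u_k e^{\mathbf{i} k x}$, the three quadratic forms $\mathbf{u}^T G \mathbf{u}$, $\mathbf{u}^T E G \mathbf{u}$, and $\mathbf{u}^T E G E \mathbf{u}$ become $\tfrac{1}{\pi}$ times integrals over $(-\pi,\pi)$ of $|\hat{\mathbf{u}}|^2$, $\Re[\overline{\widehat{E\mathbf{u}}}\, \hat{\mathbf{u}}]$, and $|\widehat{E\mathbf{u}}|^2$ respectively, each weighted by $\Re[-g(\alpha,x)]$ (the reality of these quadratic forms kills any imaginary part coming from $\Im[g(\alpha,x)]$ by the parity of $\Re[-g]$ in $x$). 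Using Lemma~\ref{generatingfunction_rario} to lower-bound $\Re[-g(\alpha,x)]$ by $\varsigma_\alpha |g(\alpha,x)| = |\cos(\alpha\pi/2)||g(\alpha,x)|$ where convenient, the task reduces to an integrated inequality among these three integrands with common weight $|g(\alpha,x)|$.

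The main obstacle, and where the concavity assumption must really enter, is to convert the discrete sign condition $e_{i+1} - 2 e_i + e_{i-1} \le 0$ (together with $e_i \ge 0$) into the bound above at the Fourier level. I plan to tackle this via a discrete summation-by-parts (Abel) identity applied to $\widehat{E\mathbf{u}}(x) = \sum_k e_k u_k e^{\mathbf{i} k x}$, which transfers the second difference off $\{e_k\}$ onto partial sums of $\{u_k e^{\mathbf{i} k x}\}$; the sign of $e_{k+1} - 2 e_k + e_{k-1}$ then forces a definite sign in the resulting bilinear expression, while the endpoint term is controlled using $e_i \ge 0$ and the bound $e_i \le M$. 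The loss factor $2$ on the right-hand side of the target inequality should emerge naturally from a weighted Cauchy--Schwarz step that splits $|\widehat{E\mathbf{u}}|^2$ against $\Re[\overline{\widehat{E\mathbf{u}}}\, \hat{\mathbf{u}}]$ and $|\hat{\mathbf{u}}|^2$, calibrated against the constant $|\cos(\alpha\pi/2)|$ from Lemma~\ref{generatingfunction_rario}; integration in $x$ then delivers \eqref{key_inequality}.
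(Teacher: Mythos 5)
First, note that the paper itself gives no proof of this lemma: it is quoted from \cite{vonglyu2017}, so there is no in-paper argument to compare yours against, and your proposal has to stand on its own. As it stands it does not. The preliminary reductions are fine --- the sign flip $\tilde d\mapsto-\tilde d$, the substitution $E=\max_i\tilde d_i\,I-\tilde D$ with the resulting equivalent inequality, and the Fourier representations of the three quadratic forms via Lemma~\ref{JIN} are all correct. But every step that actually uses convexity --- which is the entire content of the lemma --- is deferred to an unexecuted plan: you never write down the summation-by-parts identity, never exhibit the bilinear expression whose sign is supposedly forced by $e_{k+1}-2e_k+e_{k-1}\le 0$, and never show how the endpoint terms are absorbed or where exactly the factor $2$ comes out. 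Note also that the reformulation is not obviously progress: ${\bf u}^TEGE{\bf u}$ is precisely the kind of quantity the lemma is about (now with a concave \emph{non-negative} weight, a case the hypotheses deliberately exclude), and the cross term ${\bf u}^TEG{\bf u}$ has no definite sign, so plain Cauchy--Schwarz in the $G$-inner product cannot close the estimate.

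Two concrete reasons the sketched route is unlikely to work as described. First, any step ``calibrated against'' $\varsigma_\alpha=|\cos(\alpha\pi/2)|$ from Lemma~\ref{generatingfunction_rario} carries a loss of order $1/\varsigma_\alpha$, which blows up as $\alpha\to1^{+}$; the constant in \eqref{key_inequality} is the absolute number $2$, independent of $\alpha$, so no bound inherited from $\varsigma_\alpha$ can reproduce it (in this paper $\varsigma_\alpha$ is used only in Theorem~\ref{key-estimate}, exactly where an $\alpha$-dependent loss is acceptable). Second, transferring differences of $\{e_k\}$ onto partial sums of $\{u_ke^{{\bf i}kx}\}$ implicitly requires the truncation (partial-sum) operators to be bounded, uniformly in the truncation index, on the weighted space with weight $\Re[-g(\alpha,\cdot)]$; that boundedness is essentially a monotone-weight instance of the very inequality you are trying to prove and is nowhere justified. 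So what you have is a framework with the key mechanism missing, not a proof; to complete it you need an argument in which convexity enters explicitly and which yields an $\alpha$-independent constant, as in the source \cite{vonglyu2017}, rather than a $\varsigma_\alpha$-weighted Cauchy--Schwarz step.
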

Assuming $0\leq \kappa_{min}\leq d(x)\leq\kappa_{max}<\infty$. The following theorem reveals some inclusion relations between numerical ranges of $G$ and $-DG_{\alpha}-G_{\alpha}^{\rm T}D$, which acts an important role in the analysis of the proposed preconditioner.
\begin{theorem}\label{key-estimate}
	For any ${\bf u}=[u_1,u_2,\ldots,u_{M}]^T$, we have
	\begin{align}\label{cond}
	&\left(\kappa-\frac{\sqrt{2}(\kappa_{max}-\kappa_{min})}{\varsigma_\alpha}\right){\bf u}^TG{\bf u}\notag\\
	&\leq {\bf u}^T(-DG_\alpha-G_\alpha^TD){\bf u}\leq \left(\kappa+\frac{\sqrt{2}(\kappa_{max}-\kappa_{min})}{\varsigma_\alpha}\right){\bf u}^TG{\bf u},
	\end{align}
	where $\kappa=\kappa_{max}$ when $d(x)$ is concave, and $\kappa=\kappa_{min}$ when $d(x)$ is convex.
\end{theorem}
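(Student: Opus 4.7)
The plan is to split $D$ according to the convexity/concavity of $d(x)$. Writing $D = \kappa I + \tilde D$ with $\kappa = \kappa_{\min}$ in the convex case and $\kappa = \kappa_{\max}$ in the concave case, the residual diagonal $\tilde D$ has entries $\tilde d_i = d_i - \kappa$ whose sign pattern and convexity match exactly the hypotheses of Lemma \ref{key_inequality1}: $\tilde d\ge 0$ and convex, or $\tilde d\le 0$ and concave. Under this split,
\begin{equation*}
-DG_\alpha - G_\alpha^T D \;=\; \kappa\, G \;+\; \bigl(-\tilde D G_\alpha - G_\alpha^T \tilde D\bigr),
\end{equation*}
so the first piece contributes exactly $\kappa\,{\bf u}^T G{\bf u}$ and it remains to prove $|{\bf u}^T(-\tilde D G_\alpha - G_\alpha^T\tilde D){\bf u}|\le \frac{\sqrt 2 (\kappa_{\max}-\kappa_{\min})}{\varsigma_\alpha}{\bf u}^T G{\bf u}$.

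Since $\tilde D G_\alpha$ and $G_\alpha^T \tilde D$ are mutual transposes, the quadratic form in ${\bf u}$ equals $-2\,{\bf u}^T\tilde D G_\alpha {\bf u}$, so the task reduces to estimating $|{\bf u}^T\tilde D G_\alpha {\bf u}|$. Setting ${\bf w}=\tilde D{\bf u}$ and using Lemma \ref{JIN} on ${\bf w}^T G_\alpha {\bf u}$, I would apply the Cauchy--Schwarz inequality in the weighted $L^2$ space with weight $|g(\alpha,x)|$ to obtain
\begin{equation*}
|{\bf w}^T G_\alpha {\bf u}|\;\le\;\Bigl(\tfrac{1}{2\pi}\!\!\int_{-\pi}^{\pi}\!|\hat W|^2|g(\alpha,x)|\,dx\Bigr)^{\!1/2}\Bigl(\tfrac{1}{2\pi}\!\!\int_{-\pi}^{\pi}\!|\hat U|^2|g(\alpha,x)|\,dx\Bigr)^{\!1/2},
\end{equation*}
where $\hat U,\hat W$ denote the trigonometric polynomials built from ${\bf u},{\bf w}$. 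Lemma \ref{generatingfunction_rario} then converts the weight: $|g(\alpha,x)|\le \Re[-g(\alpha,x)]/\varsigma_\alpha$. Combining this with the identity ${\bf v}^T G_\alpha{\bf v}=-\tfrac12 {\bf v}^T G{\bf v}$, obtained by taking the real part of Lemma \ref{JIN} with both arguments equal to ${\bf v}$, each weighted-$L^2$ factor is bounded by $\tfrac{1}{2\varsigma_\alpha}{\bf v}^T G{\bf v}$ for ${\bf v}\in\{{\bf u},{\bf w}\}$.

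The final step is to apply Lemma \ref{key_inequality1} to ${\bf w}^T G{\bf w}={\bf u}^T\tilde D G\tilde D{\bf u}$, which yields ${\bf w}^T G{\bf w}\le 2\max_i\tilde d_i^{\,2}\,{\bf u}^T G{\bf u}\le 2(\kappa_{\max}-\kappa_{\min})^2\,{\bf u}^T G{\bf u}$; chaining the inequalities produces $|{\bf u}^T\tilde D G_\alpha{\bf u}|\le\frac{\sqrt 2(\kappa_{\max}-\kappa_{\min})}{2\varsigma_\alpha}{\bf u}^T G{\bf u}$, and the factor of two from $-2{\bf u}^T\tilde D G_\alpha{\bf u}$ gives the claimed constant. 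The main obstacle is not the algebra but choosing the correct shift $\kappa$ so that the convexity/sign hypothesis of Lemma \ref{key_inequality1} holds for $\tilde d$; that is why $\kappa$ must be tied to $\kappa_{\min}$ in the convex branch and $\kappa_{\max}$ in the concave branch, and why the two one-sided bounds in \eqref{cond} must be derived symmetrically from a single absolute-value estimate.
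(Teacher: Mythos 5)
Your proposal is correct and follows essentially the same route as the paper: the shift $D=\kappa I+\tilde D$ chosen so that $\tilde d$ meets the sign/convexity hypotheses of Lemma \ref{key_inequality1}, the passage to generating-function integrals via Lemma \ref{JIN}, the ratio bound of Lemma \ref{generatingfunction_rario} combined with Cauchy--Schwarz, the bound ${\bf u}^T\tilde D G\tilde D{\bf u}\le 2(\kappa_{\max}-\kappa_{\min})^2{\bf u}^TG{\bf u}$, and the final two-sided sandwich. The only differences (reducing the cross term to $-2{\bf u}^T\tilde DG_\alpha{\bf u}$ and weighting Cauchy--Schwarz by $|g|$ instead of $\Re[-g]$) are cosmetic and yield the same constant.
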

\begin{proof}
	Denote ${\tilde D}=D-\kappa I$, then $DG_\alpha+G_\alpha^TD=\kappa(G_\alpha+G_\alpha^T)+{\tilde D}G_\alpha+G_\alpha^T{\tilde D}$.
	And, for any ${\bf u}=[u_1,u_2,\ldots,u_{M}]^T$, we have
	\begin{align*}
	{\bf u}^T(-DG_\alpha-G_\alpha^TD){\bf u}=\kappa{\bf u}^TG{\bf u}
	+{\bf u}^T\left(-{\tilde D}G_\alpha-G_\alpha^T {\tilde D}\right){\bf u}.
	\end{align*}
	Denote $u(x)=\sum_{k=1}^{M}{ u}_ke^{{\bf i}kx}$
	and $v(x)=\sum_{k=1}^{M}{({\tilde D}u)}_ke^{{\bf i}kx}$, it follows by Lemmas \ref{JIN}
	and \ref{key_inequality1} that
	\begin{align}\label{condproof2}
	{\bf  u}^TG{\bf  u}&={\bf  u}^T\left(-G_\alpha-G_\alpha^T\right){\bf  u}
	=\frac1{\pi}\int_{-\pi}^\pi
	\Re[-g(\alpha,x)]|u(x)|^2dx,\\\label{condproof2-2}
	 {\bf  u}^T{\tilde D}W{\tilde D}{\bf  u}
	&=\frac1{\pi}\int_{-\pi}^\pi
	\Re[-g(\alpha,x)]|v(x)|^2dx\notag\\
	&\leq \frac{2(\kappa_{max}-\kappa_{min})^2}{\pi}\int_{-\pi}^\pi
	\Re[-g(\alpha,x)]|u(x)|^2dx.
	\end{align}
	Using Lemma \ref{JIN} again and applying Cauchy-Schwarz inequality, Lemma \ref{generatingfunction_rario}, \eqref{condproof2} and \eqref{condproof2-2}, we get
	\begin{align*}
	&\left|{\bf  u}^T\left(-{\tilde D}G_\alpha-G_\alpha^T {\tilde D}\right){\bf  u}\right|\notag\\
	&=\frac1{2\pi}\Big|\int_{-\pi}^\pi
	\big(-v^*gu-u^*g^*v\big) dx\Big|\\
	&{\le}\frac1{\pi}\int_{-\pi}^\pi
	|g(\alpha,x)||v(x)||u(x)| dx\\
	&\leq \frac1{\pi\varsigma_\alpha}\int_{-\pi}^\pi
	\Re[-g(\alpha,x)]|v(x)||u(x)| dx\\
	&\leq \frac1{\pi\varsigma_\alpha}\sqrt{\int_{-\pi}^\pi
		\Re[-g(\alpha,x)]|v(x)|^2 dx}\sqrt{\int_{-\pi}^\pi
		\Re[-g(\alpha,x)]|u(x)|^2 dx}\\
	&\le \frac{\sqrt{2}(\kappa_{max}-\kappa_{min})}{\pi\varsigma_\alpha}\int_{-\pi}^\pi
	\Re[-g(\alpha,x)]|u(x)|^2 dx.\\
	&=\frac{\sqrt{2}(\kappa_{max}-\kappa_{min})}{\varsigma_\alpha}{\bf u}^TG{\bf u}.
	\end{align*}
	Thus, the desired result can be obtained just by utilizing the following inequality
	\begin{align*}
	\kappa{\bf u}^TG{\bf u}-\left|{\bf u}^T\left(-{\tilde D}G_\alpha-G_\alpha^T {\tilde D}\right){\bf
		u}\right|&\leq{\bf u}^T(-DG_\alpha-G_\alpha^TD){\bf u}\\
	&\leq\kappa{\bf u}^TG{\bf u}
	+\left|{\bf u}^T\left(-{\tilde D}G_\alpha-G_\alpha^T {\tilde D}\right){\bf u}\right|
	\end{align*}
\end{proof}

\subsection{Toeplitz preconditioner for the discrete one-dimensional fractional diffusion equation}
To solve \eqref{matrixform} is equivalent to recursively solve the following linear systems
\begin{equation}\label{1dlinsys}
{\bf A} {\bf u}^{n}= {b}^n,\quad n=1,2...,N,
\end{equation}
where ${\bf A}={I}_{M}-\eta {D} {G}_{\alpha}$, $\eta=\tau/(2h^{\alpha})$, ${I}_k$ denotes $k\times k$ identity matrix, ${b}^n=({I}_{M}+\eta {D} {G}_{\alpha}){u}^{n-1}+\tau {\bf f}^{n-\frac{1}{2}}$. As explained in the introduction section, a good preconditioner is required for the linear systems in \eqref{1dlinsys}.

For any $m\times m$ diagonal matrix, ${\bf C}=\diag(c_1,c_2,...,c_m)$, denote ${\tt mean}({\bf C})=\frac{1}{m}\sum\limits_{i=1}^{m}c_{i}$.
In this subsection, we propose a Toeplitz preconditioner for the linear systems in \eqref{1dlinsys} such that
\begin{equation}\label{1dtplzprec}
{\bf P}={I}_{M}-\eta\bar{d}{G}_{\alpha},
\end{equation}
where $\bar{d}={\tt mean}({D})$.
In the following, we discuss a computationally effective representation of ${\bf P}^{-1}$, which allows fast matrix-vector multiplication of ${\bf P}^{-1}$.

Let ${\bf v}=(v_1,v_2,...,v_M)^{\rm T}$ and $\tilde{{\bf v}}=(\tilde{v}_1,\tilde{v}_2,...,\tilde{v}_M)^{\rm T}$ be solutions of following linear systems
\begin{align}\label{pregsformlinsys}
{\bf P}{\bf v}={\bf e}_1\equiv(1,0,0,...,0)^{\rm T},\qquad {\bf P}\tilde{{\bf v}}={\bf e}_M\equiv(0,0,...,0,1)^{\rm T}.
\end{align}
According to the Gohberg-Semencul-type formula \cite{gohbergolshevsky}, ${\bf P}^{-1}$ can be expressed as follows
\begin{align}\label{gsformula}
{\bf P}^{-1}=\frac{1}{2v_1}({\bf S}_1{\bf C}_1-{\bf S}_2{\bf C}_2),
\end{align}
where ${\bf S}_1$, ${\bf S}_2$ are skew-circulant matrices with ${\bf v}$, $\bar{{\bf v}}=(-\tilde{v}_M,\tilde{v}_1,...,\tilde{v}_{M-1})^{\rm T}$ as their first columns, respectively; ${\bf C}_1$, ${\bf C}_2$ are circulant matrices with \\$\hat{{\bf v}}=(\tilde{v}_M,\tilde{v}_1,...,\tilde{v}_{M-1})^{\rm T}$, ${\bf v}$ as their first columns, respectively.
From \eqref{pregsformlinsys}, we see that $v_1$ is the first diagonal entry of ${\bf P}^{-1}$. From Lemma \ref{Galpha}, we see that ${\bf P}+{\bf P}^{\rm T}$ is positive definite. Thus,
\begin{align*}
v_1={\bf e}_1^{\rm T}{\bf P}^{-1}{\bf e}_1=\frac{1}{2}{\bf e}_1^{\rm T}({\bf P}^{-1}+{\bf P}^{-\rm T}){\bf e}_1=\frac{1}{2}{\bf e}_1^{\rm T}{\bf P}^{-1}({\bf P}+{\bf P}^{\rm T}){\bf P}^{-\rm T}{\bf e}_1>0,
\end{align*}
which means \eqref{gsformula} is applicable.
Moreover, the Toeplitz linear systems in \eqref{pregsformlinsys} can be efficiently solved by the super fast direct solver proposed in \cite{hoog1987}.

For ${\bf C}\in\mathbb{C}^{m\times n}$, denote by $\Sigma({\bf C})$, the set of singular values of ${\bf C}$. Also denote $\Sigma^2({\bf C})=\{x^2|x\in\Sigma({\bf C})\}$. For any matrix ${\bf C}\in\mathbb{C}^{m\times m}$, denote by $\sigma({\bf C})$, the spectrum of ${\bf C}$. For a number $\lambda$, denote by $\Re(\lambda)$, the real part of $\lambda$.

For any invertible matrix ${\bf C}\in\mathbb{C}^{m\times m}$, define its condition number as $$\cond({\bf C})\triangleq||{\bf C}||_2||{\bf C}^{-1}||_2.$$

\begin{lemma}\label{stabllem}\textnormal{(see \cite[Lemma 2.7]{tianwy2015})}
	For any ${\bf C}\in\mathbb{C}^{m\times m}$, it holds $$\{\Re(\lambda)|\lambda\in\sigma({\bf C})\}\subset\left[\min\limits_{z\in\sigma(({\bf C}+{\bf C}^{*})/2)}z,\max\limits_{z\in\sigma(({\bf C}+{\bf C}^{*})/2)}z\right].$$
\end{lemma}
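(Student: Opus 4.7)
The plan is to reduce the statement to the standard Rayleigh quotient bound for the Hermitian matrix $({\bf C}+{\bf C}^{*})/2$. First I would pick an arbitrary $\lambda\in\sigma({\bf C})$ together with a unit eigenvector ${\bf v}\in\mathbb{C}^{m}$ (i.e.\ ${\bf v}^{*}{\bf v}=1$) satisfying ${\bf C}{\bf v}=\lambda{\bf v}$. Left-multiplying by ${\bf v}^{*}$ gives ${\bf v}^{*}{\bf C}{\bf v}=\lambda$, and taking conjugate transposes gives ${\bf v}^{*}{\bf C}^{*}{\bf v}=\bar{\lambda}$.

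Next I would average these two identities to isolate the real part: adding them and dividing by $2$ yields
\begin{equation*}
\Re(\lambda)=\tfrac{1}{2}(\lambda+\bar{\lambda})={\bf v}^{*}\!\left(\tfrac{{\bf C}+{\bf C}^{*}}{2}\right)\!{\bf v}.
\end{equation*}
The right-hand side is the Rayleigh quotient of the Hermitian matrix $H\triangleq({\bf C}+{\bf C}^{*})/2$ at the unit vector ${\bf v}$. By the standard Rayleigh–Ritz bound (which is essentially the Hermitian analogue of Lemma \ref{bilinear_inequality} already invoked earlier in the paper), any such Rayleigh quotient lies in the closed interval $[\lambda_{\min}(H),\lambda_{\max}(H)]$, i.e.\ between $\min_{z\in\sigma(H)}z$ and $\max_{z\in\sigma(H)}z$.

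Since $\lambda$ was an arbitrary eigenvalue of ${\bf C}$, this establishes the claimed inclusion. No step is really an obstacle here; the only thing to be careful about is that the eigenvector must be normalized so that the quadratic form ${\bf v}^{*}H{\bf v}$ genuinely equals the Rayleigh quotient, and that $H$ is Hermitian so that its eigenvalues are real and the min/max in the statement make sense. The result is in fact a very special case of the Toeplitz–Hausdorff theorem that the numerical range of ${\bf C}$ is a convex set whose projection onto the real axis coincides with $[\lambda_{\min}(H),\lambda_{\max}(H)]$, and which contains $\sigma({\bf C})$; but the direct Rayleigh-quotient argument above is the shortest route and is what I would write out.
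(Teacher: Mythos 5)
Your argument is correct: normalizing the eigenvector, writing $\Re(\lambda)={\bf v}^{*}\bigl(({\bf C}+{\bf C}^{*})/2\bigr){\bf v}$, and invoking the Rayleigh--Ritz bound for the Hermitian part is exactly the standard proof of this result (it is Bendixson's theorem). The paper itself offers no proof to compare against --- it simply cites the lemma from \cite[Lemma 2.7]{tianwy2015} --- so your self-contained derivation fills that in with the same argument one finds in the cited source, and no gaps remain.
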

As a preconditioner, the invertibility is essential.
\begin{proposition}\label{1dinvertibility}
	${\bf P}$ is invertible for any $\alpha\in(1,2)$.
\end{proposition}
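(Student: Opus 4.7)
The plan is to show invertibility by establishing that the symmetric (Hermitian) part of $\mathbf{P}$ is positive definite, and then invoke Lemma \ref{stabllem} to conclude that every eigenvalue of $\mathbf{P}$ has strictly positive real part, so $0 \notin \sigma(\mathbf{P})$.

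First, I would write down the symmetric part explicitly:
\begin{equation*}
\mathcal{H}(\mathbf{P}) = \tfrac{1}{2}(\mathbf{P}+\mathbf{P}^{\mathrm{T}}) = I_M - \tfrac{\eta \bar{d}}{2}\bigl(G_\alpha + G_\alpha^{\mathrm{T}}\bigr).
\end{equation*}
Since $\tau,h>0$ we have $\eta=\tau/(2h^\alpha)>0$, and since $d(x)\ge d_->0$ we have $\bar{d}={\tt mean}(D)>0$. By Lemma \ref{Galpha}, $G_\alpha + G_\alpha^{\mathrm{T}}$ is negative definite, so $-\tfrac{\eta \bar{d}}{2}(G_\alpha + G_\alpha^{\mathrm{T}})$ is positive definite. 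Adding the identity preserves positive definiteness, so $\mathcal{H}(\mathbf{P})$ is positive definite, and in particular its minimum eigenvalue is strictly greater than $1$.

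Next, I would apply Lemma \ref{stabllem} to $\mathbf{P}$: every eigenvalue $\lambda$ of $\mathbf{P}$ satisfies $\Re(\lambda)\ge \min_{z\in\sigma(\mathcal{H}(\mathbf{P}))} z > 0$. Consequently $0\notin\sigma(\mathbf{P})$, so $\mathbf{P}$ is nonsingular for every $\alpha\in(1,2)$. Since the argument uses only the signs of $\eta$ and $\bar{d}$ together with Lemma \ref{Galpha}, there is no real obstacle; the only thing to verify carefully is that positivity of $d(x)$ indeed forces $\bar{d}>0$, which is immediate from the assumption $0<d_-\le d(x)$.
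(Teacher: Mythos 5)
Your proof is correct and follows essentially the same route as the paper: both establish that the symmetric part of ${\bf P}$ is positive definite via Lemma \ref{Galpha} and then invoke Lemma \ref{stabllem} to conclude that every eigenvalue of ${\bf P}$ has positive real part, hence ${\bf P}$ is invertible. Your version merely spells out the explicit form of ${\cal H}({\bf P})$ and the positivity of $\eta$ and $\bar{d}$, which the paper leaves implicit.
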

\begin{proof}
	By lemme \ref{Galpha}, it is easy to see that ${\bf P}+{\bf P}^{\rm T}$ is positive definite and thus has positive eigenvalues. From Lemma \ref{stabllem}, we see that $\{\Re(\lambda)|\lambda\in\sigma({\bf P})\}\subset(0,+\infty)$. Therefore, ${\bf P}$ is invertible.
\end{proof}

For any Hermitian matrices ${\bf H}_1,{\bf H}_2\in\mathbb{C}^{m\times m}$, denote ${\bf H}_1 \prec {\bf H}_2$ or ${\bf H}_2 \succ {\bf H}_1$ if
${\bf H}_2-{\bf H}_1$ is Hermitian positive definite.
Especially, we denote ${\bf O}\prec{\bf H}_1$ or ${\bf H}_1\succ{\bf O}$, when ${\bf H}_1$ itself is
Hermitian positive definite. Also, we use ${\bf H}_1 \preceq {\bf H}_2$ or ${\bf H}_2 \succeq {\bf H}_1$ to denote a Hermitian positive semidefinite ${\bf H}_2-{\bf H}_1$ and use ${\bf O}\preceq{\bf H}_1$ or ${\bf H}_1\succeq{\bf O}$ to denote a Hermitian positive semidefinite ${\bf H}_1$.

Next, we are to estimate the condition number of the preconditioned matrix ${\bf A}{\bf P}^{-1}$. \begin{proposition}\label{wghtsumbdlem}
	For positive numbers $\xi_i$, $\zeta_i$ ($1\leq i\leq m$),
	it obviously holds that
	\begin{equation*}
	\min\limits_{1\leq i\leq m}\frac{\xi_i}{\zeta_i}\leq\bigg(\sum\limits_{i=1}^{m}\zeta_i\bigg)^{-1}\bigg(\sum\limits_{i=1}^{m}\xi_i\bigg)\leq\max\limits_{1\leq i\leq m}\frac{\xi_i}{\zeta_i}.
	\end{equation*}
\end{proposition}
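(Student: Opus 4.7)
The plan is to recognize the middle quantity as a weighted average of the ratios $\xi_i/\zeta_i$ with positive weights $\zeta_i$, and then use the elementary fact that a weighted average lies between its minimum and maximum entry.

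More concretely, I would set $r_i := \xi_i/\zeta_i$ for $1\leq i\leq m$, so that $\xi_i = r_i\zeta_i$. Let $r_{\min} := \min_{1\leq i\leq m} r_i$ and $r_{\max} := \max_{1\leq i\leq m} r_i$. Since all $\zeta_i$ are strictly positive, multiplying the pointwise bound $r_{\min} \leq r_i \leq r_{\max}$ by $\zeta_i$ preserves the inequalities, giving
\begin{equation*}
r_{\min}\,\zeta_i \;\leq\; \xi_i \;\leq\; r_{\max}\,\zeta_i, \qquad 1\leq i\leq m.
\end{equation*}
Summing over $i$ and dividing by the positive quantity $\sum_{i=1}^m \zeta_i$ immediately yields the claimed two-sided bound.

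I expect no real obstacle: the statement is a one-line fact about weighted averages, and the only thing to keep track of is that positivity of the $\zeta_i$ is what licenses both the direction-preserving multiplication and the division by $\sum_i \zeta_i$ at the end. The condition $\zeta_i>0$ in the hypothesis is therefore used in an essential way; positivity of the $\xi_i$, interestingly, is not needed for the inequality itself, though it is assumed in the statement and keeps the ratios $r_i$ well-defined and positive as required for the subsequent use of the proposition in the condition-number estimates.
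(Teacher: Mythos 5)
Your argument is correct and is precisely the standard weighted-average reasoning the paper has in mind: the paper states this proposition without proof (``it obviously holds''), and your bound $r_{\min}\zeta_i \leq \xi_i \leq r_{\max}\zeta_i$ followed by summing and dividing by $\sum_i \zeta_i > 0$ is the intended one-line justification. Your remark that only positivity of the $\zeta_i$ is essential is also accurate.
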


\begin{theorem}\label{1dcondesti}
	Assume
	\begin{description}
		\item[(i)] for any $x\in(x_{L},x_{R})$, $d(x)\in[\kappa_{\min},\kappa_{\max}]$ for positive constants $\kappa_{\min}$ and $\kappa_{\max}$,
		\item[(ii)] $\kappa_{\max}-\nu_{\alpha}> 0$, with $\nu_{\alpha}=\sqrt{2}(\kappa_{\max}-\kappa_{\min})/\varsigma_{\alpha}$,
		\item[(iii)] $d(x)$ is concave.
	\end{description}
	Then, for any $N\geq 1$, any $M\geq1$, $\Sigma^2({\bf A}{\bf P}^{-1})\subset[\check{s},\hat{s}]$ and thus
	$$\sup\limits_{N,M\geq 1}\cond({\bf A}{\bf P}^{-1})\leq\sqrt{\hat{s}/\check{s}},$$
	where $\check{s}$ and $\hat{s}$ are positive constants independent of $\tau$, $h$ and given by
	\begin{equation*}
	\check{s}=\min\left\{\frac{\kappa_{\max}-\nu_{\alpha}}{\kappa_{\max}},\frac{\kappa_{\min}^2}{\kappa_{\max}^2}\right\},\quad\hat{s}=\max\left\{\frac{\kappa+\nu_{\alpha}}{\kappa_{\min}},\frac{\kappa_{\max}^2}{\kappa_{\min}^2}\right\}.
	\end{equation*}
\end{theorem}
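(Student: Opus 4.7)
The plan is to reduce the bound on $\Sigma^2({\bf A}{\bf P}^{-1})$ to bounds on the generalized Rayleigh quotient
\begin{equation*}
\rho(w) \;=\; \frac{w^T{\bf A}^T{\bf A}w}{w^T{\bf P}^T{\bf P}w},\qquad w\neq 0.
\end{equation*}
Since ${\bf P}$ is invertible by Proposition \ref{1dinvertibility}, the substitution $w={\bf P}^{-1}v$ identifies $v^T{\bf P}^{-T}{\bf A}^T{\bf A}{\bf P}^{-1}v/v^Tv$ with $\rho(w)$, so the eigenvalues of ${\bf P}^{-T}{\bf A}^T{\bf A}{\bf P}^{-1}$, which are exactly the squared singular values of ${\bf A}{\bf P}^{-1}$, all lie in $[\min_w\rho(w),\max_w\rho(w)]$.

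The first step is to expand
\begin{align*}
{\bf A}^T{\bf A} &= I_M + \eta(-DG_\alpha-G_\alpha^TD) + \eta^2 G_\alpha^T D^2 G_\alpha,\\
{\bf P}^T{\bf P} &= I_M + \eta\bar d\, G + \eta^2 \bar d^{\,2}\, G_\alpha^T G_\alpha,
\end{align*}
where $G=-G_\alpha-G_\alpha^T$ is symmetric positive semidefinite by Lemma \ref{Galpha}. I would then sandwich the three matching quadratic forms. Because $d(x)$ is concave (assumption (iii)), Theorem \ref{key-estimate} applies with $\kappa=\kappa_{\max}$ and gives
\begin{equation*}
(\kappa_{\max}-\nu_\alpha)\, w^T G w \;\leq\; w^T(-DG_\alpha - G_\alpha^T D)w \;\leq\; (\kappa_{\max}+\nu_\alpha)\, w^T G w,
\end{equation*}
while assumption (ii) forces the lower bound to be a non-negative quadratic form. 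The spectral pinch $\kappa_{\min}^2 I_M \preceq D^2 \preceq \kappa_{\max}^2 I_M$ yields $\kappa_{\min}^2 w^T G_\alpha^T G_\alpha w \leq w^T G_\alpha^T D^2 G_\alpha w \leq \kappa_{\max}^2 w^T G_\alpha^T G_\alpha w$, and $\kappa_{\min}\leq\bar d\leq\kappa_{\max}$ sandwiches the $\bar d$- and $\bar d^{\,2}$-coefficients in ${\bf P}^T{\bf P}$.

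After these substitutions, both numerator and denominator of $\rho(w)$ are written as linear combinations, with non-negative scalar coefficients, of the three common non-negative quantities $w^Tw$, $\eta\, w^T G w$, $\eta^2\, w^T G_\alpha^T G_\alpha w$. Proposition \ref{wghtsumbdlem} then bounds the overall ratio termwise: for the upper direction the three coefficient ratios are $1$, $(\kappa_{\max}+\nu_\alpha)/\kappa_{\min}$, $\kappa_{\max}^2/\kappa_{\min}^2$, whose maximum is $\hat s$; for the lower direction they are $1$, $(\kappa_{\max}-\nu_\alpha)/\kappa_{\max}$, $\kappa_{\min}^2/\kappa_{\max}^2$, whose minimum is $\check s$. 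Combining the two directions gives $\Sigma^2({\bf A}{\bf P}^{-1})\subset[\check s,\hat s]$ and hence $\cond({\bf A}{\bf P}^{-1})\leq\sqrt{\hat s/\check s}$, a bound independent of $M$, $N$, $h$, $\tau$.

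I expect the main obstacle to be the sign control required for Proposition \ref{wghtsumbdlem} to apply termwise, which is precisely what assumption (ii) supplies: without $\kappa_{\max}>\nu_\alpha$ the lower sandwich of $w^T(-DG_\alpha-G_\alpha^TD)w$ could be negative for some $w$, spoiling the clean three-term comparison. The degenerate case in which $w^T G w$ or $w^T G_\alpha^T G_\alpha w$ vanishes is only a minor technicality, since the corresponding term then drops out of both numerator and denominator and the remaining ratio is still controlled by the same extreme coefficient ratios.
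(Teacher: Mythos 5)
Your proposal is correct and follows essentially the same route as the paper's proof: expand ${\bf A}^{\rm T}{\bf A}$ and ${\bf P}^{\rm T}{\bf P}$, apply Theorem \ref{key-estimate} (with $\kappa=\kappa_{\max}$ by concavity) to sandwich the cross term, bound $G_\alpha^{\rm T}D^2G_\alpha$ and the $\bar d$-terms by the extreme values of $d$, and conclude via the generalized Rayleigh quotient ${\bf z}^{\rm T}{\bf A}^{\rm T}{\bf A}{\bf z}/{\bf z}^{\rm T}{\bf P}^{\rm T}{\bf P}{\bf z}$ together with the termwise comparison of Proposition \ref{wghtsumbdlem}. No gaps of substance; your remarks on the role of assumption (ii) and the degenerate terms match the paper's treatment.
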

\begin{proof}
	By straightforward calculation,
	\begin{align*}
	{\bf A}^{\rm T}{\bf A}&={I}_{M}-\eta({G}_{\alpha}^{\rm T}{D}+{D}{G}_{\alpha})+\eta^2{G}_{\alpha}^{\rm T}{D}^2{G}_{\alpha},\\
	{\bf P}^{\rm T}{\bf P}&={I}_{M}+\eta\bar{d}{G}+\eta^2\bar{d}^2{G}_{\alpha}^{\rm T}{G}_{\alpha}.
	\end{align*}
	By Theorem \ref{key-estimate}, we see that
	\begin{align}
	{\bf O}&\prec{I}_{M}+(\kappa_{\max}-\nu_{\alpha})\eta{G}+\kappa_{\min}^2\eta^2{G}_{\alpha}^{\rm T}{G}_{\alpha}\notag\\
	&\preceq{\bf A}^{\rm T}{\bf A}\preceq{I}_{M}+(\kappa_{\max}+\nu_{\alpha})\eta{G}+\kappa_{\max}^2\eta^2{G}_{\alpha}^{\rm T}{G}_{\alpha}\label{aatrcontrl}.
	\end{align}
	For any non-zero vector ${\bf y}\in\mathbb{R}^{M\times 1}$, denote ${\bf z}={\bf P}^{-1}{\bf y}$. Then, it holds
	\begin{equation*}
	\frac{{\bf y}^{\rm T}({\bf A}{\bf P}^{-1})^{\rm T}({\bf A}{\bf P}^{-1}){\bf y}}{{\bf y}^{\rm T}{\bf y}}=\frac{{\bf z}^{\rm T}{\bf A}^{\rm T}{\bf A}{\bf z}}{{\bf z}^{\rm T}{\bf P}^{\rm T}{\bf P}{\bf z}}.
	\end{equation*}
	By \eqref{aatrcontrl},
	\begin{align}\label{nrangeesti}
	&\frac{{\bf z}^{\rm T}[{I}_{M}+(\kappa_{\max}-\nu_{\alpha})\eta{G}+\kappa_{\min}^2\eta^2{G}_{\alpha}^{\rm T}{G}_{\alpha}]{\bf z}}{{\bf z}^{\rm T}[{I}_{M}+\eta\bar{d}{G}+\eta^2\bar{d}^2{G}_{\alpha}^{\rm T}{G}_{\alpha}]{\bf z}}\notag\\
	&\leq\frac{{\bf z}^{\rm T}{\bf A}^{\rm T}{\bf A}{\bf z}}{{\bf z}^{\rm T}{\bf P}^{\rm T}{\bf P}{\bf z}}\leq\frac{{\bf z}^{\rm T}[{I}_{M}+(\kappa_{\max}+\nu_{\alpha})\eta{G}+\kappa_{\max}^2\eta^2{G}_{\alpha}^{\rm T}{G}_{\alpha}]{\bf z}}{{\bf z}^{\rm T}[{I}_{M}+\eta\bar{d}{G}+\eta^2\bar{d}^2{G}_{\alpha}^{\rm T}{G}_{\alpha}]{\bf z}}.
	\end{align}
	By Proposition \eqref{wghtsumbdlem} and \eqref{nrangeesti},
	\begin{equation*}
	\check{s}=\min\left\{1,\frac{\kappa_{\max}-\nu_{\alpha}}{\kappa_{\max}},\frac{\kappa_{\min}^2}{\kappa_{\max}^2}\right\}\leq\frac{{\bf z}^{\rm T}{\bf A}^{\rm T}{\bf A}{\bf z}}{{\bf z}^{\rm T}{\bf P}^{\rm T}{\bf P}{\bf z}}\leq\max\left\{1,\frac{\kappa+\nu_{\alpha}}{\kappa_{\min}},\frac{\kappa_{\max}^2}{\kappa_{\min}^2}\right\}=\hat{s}.
	\end{equation*}
	During the proof above, there is no constraint on $M$ and $N$. Thus, for any $N\geq 1$, any $M\geq1$, $\Sigma^2({\bf A}{\bf P}^{-1})\subset[\check{s},\hat{s}]$ and $\sup\limits_{N,M\geq 1}\cond({\bf A}{\bf P}^{-1})\leq\sqrt{\hat{s}/\check{s}}$.
\end{proof}

Similar to proof of Theorem \ref{1dcondesti}, one can prove following theorem.
\begin{theorem}\label{1dcondesti1}
	Assume
	\begin{description}
		\item[(i)] for any $x\in(x_{L},x_{R})$, $d(x)\in[\kappa_{\min},\kappa_{\max}]$ for positive constants $\kappa_{\min}$ and $\kappa_{\max}$,
		\item[(ii)] $\kappa_{\min}-\nu_{\alpha}> 0$, with $\nu_{\alpha}=\sqrt{2}(\kappa_{\max}-\kappa_{\min})/\varsigma_{\alpha}$,
		\item[(iii)] $d(x)$ is convex.
	\end{description}
	Then, for any $N\geq 1$, any $M\geq1$, $\Sigma^2({\bf A}{\bf P}^{-1})\subset[\check{s},\hat{s}]$ and thus
	$$\sup\limits_{N,M\geq 1}\cond({\bf A}{\bf P}^{-1})\leq\sqrt{\hat{s}/\check{s}},$$
	where $\check{s}$ and $\hat{s}$ are positive constants independent of $\tau$, $h$ and given by
	\begin{equation*}
	\check{s}=\min\left\{\frac{\kappa_{\min}-\nu_{\alpha}}{\kappa_{\max}},\frac{\kappa_{\min}^2}{\kappa_{\max}^2}\right\},\quad\hat{s}=\max\left\{\frac{\kappa_{\min}+\nu_{\alpha}}{\kappa_{\min}},\frac{\kappa_{\max}^2}{\kappa_{\min}^2}\right\}.
	\end{equation*}
\end{theorem}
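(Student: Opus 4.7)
The plan is to mirror the proof of Theorem \ref{1dcondesti} almost line for line, changing only the two places where the concavity of $d$ enters: (1) in Theorem \ref{key-estimate} we now take the convex branch, so the central constant is $\kappa=\kappa_{\min}$ rather than $\kappa_{\max}$; and (2) the positive-definiteness condition needed for the lower bound becomes $\kappa_{\min}-\nu_\alpha>0$. So I would first expand
\begin{align*}
{\bf A}^{\rm T}{\bf A} &= I_M-\eta({G}_\alpha^{\rm T}D+DG_\alpha)+\eta^2 G_\alpha^{\rm T}D^2 G_\alpha,\\
{\bf P}^{\rm T}{\bf P} &= I_M+\eta\bar d\,G+\eta^2\bar d^{\,2}G_\alpha^{\rm T}G_\alpha,
\end{align*}
and then control the cross term $-\eta(G_\alpha^{\rm T}D+DG_\alpha)$ by applying Theorem \ref{key-estimate} in the convex case to get, for every real $\mathbf u$,
$$(\kappa_{\min}-\nu_\alpha)\,\mathbf u^{\rm T}G\mathbf u \le \mathbf u^{\rm T}\bigl(-DG_\alpha-G_\alpha^{\rm T}D\bigr)\mathbf u \le (\kappa_{\min}+\nu_\alpha)\,\mathbf u^{\rm T}G\mathbf u.$$
For the quadratic term I would use $\kappa_{\min}^2 I_M \preceq D^2 \preceq \kappa_{\max}^2 I_M$ together with the Hermitian positive semi-definiteness of $G_\alpha^{\rm T}G_\alpha$ to sandwich $\eta^2 G_\alpha^{\rm T}D^2G_\alpha$ between $\kappa_{\min}^2\eta^2 G_\alpha^{\rm T}G_\alpha$ and $\kappa_{\max}^2\eta^2 G_\alpha^{\rm T}G_\alpha$. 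Combining, assumption (ii) ($\kappa_{\min}-\nu_\alpha>0$) guarantees that the lower matrix $I_M+(\kappa_{\min}-\nu_\alpha)\eta G+\kappa_{\min}^2\eta^2 G_\alpha^{\rm T}G_\alpha$ is Hermitian positive definite, giving the two-sided bound
$$I_M+(\kappa_{\min}-\nu_\alpha)\eta G+\kappa_{\min}^2\eta^2 G_\alpha^{\rm T}G_\alpha \;\preceq\; {\bf A}^{\rm T}{\bf A} \;\preceq\; I_M+(\kappa_{\min}+\nu_\alpha)\eta G+\kappa_{\max}^2\eta^2 G_\alpha^{\rm T}G_\alpha.$$

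Next, for any nonzero $\mathbf y\in\mathbb R^M$, setting $\mathbf z={\bf P}^{-1}\mathbf y$, I would write $\mathbf y^{\rm T}({\bf A}{\bf P}^{-1})^{\rm T}({\bf A}{\bf P}^{-1})\mathbf y/\mathbf y^{\rm T}\mathbf y=\mathbf z^{\rm T}{\bf A}^{\rm T}{\bf A}\mathbf z/\mathbf z^{\rm T}{\bf P}^{\rm T}{\bf P}\mathbf z$ and then apply Proposition \ref{wghtsumbdlem} termwise, treating $\mathbf z^{\rm T}I_M\mathbf z$, $\eta\,\mathbf z^{\rm T}G\mathbf z$ and $\eta^2\mathbf z^{\rm T}G_\alpha^{\rm T}G_\alpha\mathbf z$ as the three nonnegative quantities $\zeta_i$ in the denominator (each nonnegative by Lemma \ref{Galpha} and the PSD property, and the sum strictly positive by assumption (ii)), with the corresponding numerators from the bounds above. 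Since $\bar d\in[\kappa_{\min},\kappa_{\max}]$, the resulting ratios are at most $(\kappa_{\min}+\nu_\alpha)/\kappa_{\min}$ and $\kappa_{\max}^2/\kappa_{\min}^2$ on the upper side and at least $(\kappa_{\min}-\nu_\alpha)/\kappa_{\max}$ and $\kappa_{\min}^2/\kappa_{\max}^2$ on the lower side (the constant term gives ratio $1$, which is absorbed by the other two entries since both lower candidates are $\le 1$ and both upper candidates are $\ge 1$).

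The main obstacle is not an obstacle so much as a sanity check: we must verify that replacing $\kappa_{\max}$ by $\kappa_{\min}$ in the central constant of Theorem \ref{key-estimate} does not disturb the positivity of the sandwich's lower bound, which is exactly what assumption (ii) is designed to ensure. Once that positivity is in hand, the sandwich is applied inside a Rayleigh quotient of the Hermitian positive-definite matrix ${\bf P}^{\rm T}{\bf P}$ (positive definite by Lemma \ref{Galpha}), so the two ratios $\check s,\hat s$ are attained uniformly in $\mathbf z$. Since the derivation nowhere depends on the sizes $M$ or $N$, taking the supremum over $N,M\ge 1$ gives $\Sigma^2({\bf A}{\bf P}^{-1})\subset[\check s,\hat s]$ and hence $\sup_{N,M\ge 1}\cond({\bf A}{\bf P}^{-1})\le\sqrt{\hat s/\check s}$, completing the proof.
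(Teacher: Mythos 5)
Your proposal is correct and is essentially the paper's own argument: the paper proves this theorem by remarking that it follows exactly as Theorem \ref{1dcondesti}, i.e., by invoking the convex branch of Theorem \ref{key-estimate} (so $\kappa=\kappa_{\min}$) together with the sandwich $\kappa_{\min}^2 I\preceq D^2\preceq\kappa_{\max}^2 I$, the expansion of ${\bf A}^{\rm T}{\bf A}$ and ${\bf P}^{\rm T}{\bf P}$, and Proposition \ref{wghtsumbdlem} applied to the Rayleigh quotient with $\bar d\in[\kappa_{\min},\kappa_{\max}]$. Your handling of the constant term being absorbed and of assumption (ii) guaranteeing positivity matches the intended proof.
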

\begin{remark}
	Theorems \ref{1dcondesti}--\ref{1dcondesti1} show that $\cond({\bf A}{\bf P}^{-1})$ has an upper bound independent of $\tau$ and $h$ under certain assumptions on the coefficient function $d$. Thus, Krylov subspace method for such preconditioned linear systems converges linearly and independently on the discretization step-sizes.
\end{remark}

\section{Extension to Two-dimensional OSFDE}\label{secondorder2D}
In this section, we study the following two-dimensional OSFDE \cite{tadjeran2007813}:
\begin{equation}\label{fde}
\begin{array}{l}
\displaystyle{\frac{\partial u(x,y,t)}{\partial t}=
	d(x,y)\,{_{x_L}}D_x^{\alpha}u(x,y,t)+e(x,y){_{y_L}}D_{y}^{\beta}u(x,y,t)+f(x,y,t)}, \\
~~~~~~~~~~~~~~~~~~~~~~~~~~~~~~~~~~~~~~~~~~~~~~~~~~~~~~~~~~~~~~~~~~~~~~~~~~~~~~~(x,y)\in\Omega,~t\in(0,T],\\
\vspace{2mm}
u(x,y,t)=0, \qquad (x,y)\in\partial\Omega,~ t\in[0,T],\\
u(x,y,0)=\varphi(x,y),\qquad (x,y)\in\bar\Omega,
\end{array}
\end{equation}
where $\alpha,\beta\in(1,2)$, $\Omega=(x_L,x_R)\times(y_L,y_R)$ and $d(x,y)$, $e(x,y)$ are nonnegative functions, ${_{x_L}}D_x^{\alpha}u(x,y,t)$ denotes the $\alpha$-order RL derivative with respect to $x$ direction defined as
\begin{equation*}
\begin{array}{l}\vspace{2mm}
\displaystyle{\,{_{x_L}}D_x^{\alpha}u(x,y,t)=\dfrac{1}{\Gamma(2-\alpha)}\dfrac{\partial^2}{\partial
		x^2}\int_{x_L}^x \dfrac{u(\xi,y,t)}{(x-\xi)^{\alpha-1}}\,d\xi},
\end{array}
\end{equation*}
${_{y_L}}D_y^{\beta}u(x,y,t)$ can be defined in a similar way.

To state a finite difference scheme for \eqref{fde}, we need more notations.
Let $\tau=T/N$, $h_1=\frac{x_R-x_L}{M_1+1}$, $h_2=\frac{y_R-y_L}{M_2+1}$,
where $M_1$, $M_2$ and $N$ are some positive integers.
For $i=0,1,\ldots,M_1+1$, $j=0,1,\ldots,M_2+1$ and $n = 0,1,\ldots,N$, denote $x_i=ih_1$, $y_j=jh_2$ and $t_n = n\tau$.
Denote $t_{n-\frac12}=\frac{t_n+t_{n-1}}{2}$ for $n=1,2,...,N$. Let $\bar{\Omega}_h=\{(x_i,y_j)|0\leq i\leq M_1+1, 0\leq j\leq M_2+1\}$, ${\Omega}_h={\bar{\Omega}_h}\cap{\Omega}$, $\partial{{\Omega}_h}={\bar{\Omega}_h}\cap{\partial{\Omega}}$.
Furthermore, denote $d_{i,j}=d(x_i,y_j)$, $e_{i,j}=e(x_i,y_j)$, $f_{i,j}^{n-\frac12}=f(x_i,y_j,t_{n-\frac12})$, and $\varphi_{i,j}=\varphi(x_i,y_j)$, and let $u_{i,j}^n$ be the numerical approximation of $u(x_i,y_j,t_n)$. Then, in a similar way with the one-dimensional case, we can derive the Crank-Nicolson scheme for the two-dimensional problem \eqref{fde} as the following
\begin{align}
\frac{u_{i,j}^{n}-u_{i,j}^{n-1}}{\tau}=&
\frac1{2h^{\alpha}}d_{i,j}\sum_{k=0}^{i}w_{k}^{(\alpha)}\left(u_{i-k+1,j}^{n-1}
+u_{i-k+1,j}^{n}\right)\notag\\
&+\frac1{2h^{\beta}}e_{i,j}\sum_{k=0}^{j}w_{k}^{(\beta)}\left(u_{i,j-k+1}^{n-1}
+u_{i,j-k+1}^{n}\right)+ f_{i,j}^{n-\frac{1}{2}}+{\hat R}_{i,j}^{n-\frac12},\notag\\
&1\leq i\leq M_1,~1\leq j\leq M_2,~ 1\leq n\leq N,\label{2d-scheme}
\end{align}
where ${\hat R}_{i,j}^{n-\frac12}\leq c_3(\tau^2+h_1^2+h_2^2)$ for a positive constant $c_3$.

Take
\begin{align*}
&u^{n}=[u^{n}_{1,1},u^{n}_{2,1},\ldots,u^{n}_{M_1,1},u^{n}_{1,2},\ldots,u^{n}_{M_1,2},
\ldots,u^{n}_{1,M_2},\ldots,u^{n}_{M_1,M_2}]^T,\\
&f^{n-\frac12}=[f^{n-\frac12}_{1,1},f^{n-\frac12}_{2,1},\ldots,f^{n-\frac12}_{M_1,1},f^{n-\frac12}_{1,2},
\ldots,f^{n-\frac12}_{M_1,2},
\ldots,f^{n-\frac12}_{1,M_2},\ldots,f^{n-\frac12}_{M_1,M_2}]^T,\\
&D=\diag(d_{1,1},d_{2,1},\ldots,d_{M_1,1},d_{1,2},\ldots,d_{M_1,2},\ldots,d_{M_1,M_2}),\\
&E=\diag(e_{1,1},e_{2,1},\ldots,e_{M_1,1},e_{1,2},\ldots,e_{M_1,2},\ldots,e_{M_1,M_2}).
\end{align*}
Omitting the small term ${\hat R}_{i,j}^{n-\frac12}$ in \eqref{2d-scheme}, the finite difference scheme in matrix form for \eqref{fde} can be given as:
\begin{align}\label{2Dmatrixform}
&\frac1{\tau}\left(u^{n}-u^{n-1}\right)=\left(\frac{1}{2h_1^\alpha}D(I\otimes G_\alpha)+\frac{1}{2h_2^\beta}E(G_\beta\otimes I)\right)\left(u^{n-1}+u^{n} \right)+ f^{n-\frac{1}{2}},\notag\\
&1\leq n\leq N,
\end{align}
where $I$ is the identity matrix, the symbol `$\otimes$' denotes the Kronecker product, and $G_\beta$ has the similar definition to $G_\alpha$.

\subsection{Stability and Convergence of the Two-dimensional Problem}
To discuss the stability and convergence of scheme \eqref{2Dmatrixform},
we denote
$$A=\frac{1}{2h_1^\alpha}D(I\otimes G_\alpha)+\frac{1}{2h_2^\beta}E(G_\beta\otimes I),$$
and introduce a set:
\begin{align*}
{\mathfrak{D}}=\{&X|X\succ{\bf O},~ -{\cal H}(XA)\succeq{\bf O},~{\rm cond}(X)\leq c~{\rm for~}c{\rm ~independent~of~}\tau,~h_1~\\
& {\rm and}~h_2\}.
\end{align*}

Now we present the stability of the scheme \eqref{2Dmatrixform}.
\begin{theorem}\label{stable-2D-1}
	For any $Q\in{\mathfrak{D}}$, the finite difference scheme \eqref{2Dmatrixform} is unconditionally stable and its solution satisfies the following estimate
	$$\left\|u^{n}\right\|_{Q}^2\leq \exp(2T)\left\|\varphi\right\|_{Q}^2+[\exp(2T)-1]\max_{1\leq k\leq n}\left\|f^{k-\frac12}\right\|_{Q}^2,\quad n=1,2,\ldots,N,$$
	where $\|\cdot\|_{Q}$ is defined as $\|v\|_{Q}^2:=h v^TQv$.
\end{theorem}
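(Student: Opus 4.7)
The plan is to mirror the one-dimensional argument of Theorem \ref{stable-1D} verbatim, with the diagonal weight $D^{-1}$ replaced by the abstract weight $Q\in\mathfrak{D}$. The two defining properties of $\mathfrak{D}$ are tailored exactly for this transcription: $Q\succ{\bf O}$ makes $\|\cdot\|_Q$ a bona fide norm and, being symmetric, causes the cross terms in the temporal energy identity to cancel; while $-\mathcal{H}(QA)\succeq{\bf O}$ is the two-dimensional substitute for Lemma \ref{Galpha} that lets us discard the spatial quadratic form. The uniform bound on $\mathrm{cond}(Q)$ built into $\mathfrak{D}$ plays no role here — it is reserved for later use in comparing $\|\cdot\|_Q$ to the usual discrete $L^2$ norm when one wants a convergence statement.

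\textbf{Execution.} First I would left-multiply \eqref{2Dmatrixform} by $h(u^{n-1}+u^{n})^TQ$. The symmetry of $Q$ telescopes the temporal side into $\tau^{-1}[\|u^{n}\|_Q^2-\|u^{n-1}\|_Q^2]$. The spatial contribution
\[
h(u^{n-1}+u^{n})^TQA(u^{n-1}+u^{n})=h(u^{n-1}+u^{n})^T\mathcal{H}(QA)(u^{n-1}+u^{n})\le 0
\]
is then dropped. For the remaining forcing cross term I apply Cauchy--Schwarz in the $Q$-inner product followed by Young's inequality $ab\le\tfrac12 a^2+\tfrac12 b^2$, obtaining
\[
\|u^{n}\|_Q^2\le\|u^{n-1}\|_Q^2+\tfrac{\tau}{2}\|u^{n}\|_Q^2+\tfrac{\tau}{2}\|u^{n-1}\|_Q^2+\tau\|f^{n-\frac12}\|_Q^2,
\]
which rearranges into the one-step recursion $\|u^{n}\|_Q^2\le\frac{2+\tau}{2-\tau}\|u^{n-1}\|_Q^2+\frac{2\tau}{2-\tau}\|f^{n-\frac12}\|_Q^2$, identical in form to \eqref{stable1D5}. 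Iterating $n$ times and invoking the purely scalar geometric bounds \eqref{stable1D7}--\eqref{stable1D8}, which do not involve $Q$ at all, delivers the claimed estimate.

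\textbf{Main obstacle.} The theorem itself presents no new analytic difficulty beyond the one-dimensional proof; the substantive work has been displaced into showing that $\mathfrak{D}$ is non-empty in the genuinely two-dimensional anisotropic setting, where $A$ couples the two one-sided Toeplitz-like blocks $D(I\otimes G_\alpha)$ and $E(G_\beta\otimes I)$ weighted by \emph{different} variable coefficients $d(x,y)$ and $e(x,y)$. Exhibiting an explicit admissible $Q$ will require the structural estimates of Theorem \ref{key-estimate} combined with a tensor-product argument that respects both directions simultaneously; this is the content of Lemma \ref{case2lemma}, which the present theorem takes as input.
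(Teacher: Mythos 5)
Your proposal follows exactly the paper's own argument: multiply \eqref{2Dmatrixform} by $h(u^{n-1}+u^{n})^TQ$, use $-\mathcal{H}(QA)\succeq{\bf O}$ to discard the spatial quadratic form, and then repeat the Cauchy--Schwarz, recursion, and geometric-bound steps of Theorem \ref{stable-1D} word for word, which is precisely how the paper concludes. Your side remarks are also accurate — the condition-number requirement in $\mathfrak{D}$ is indeed reserved for the convergence estimate, and the real work of exhibiting admissible $Q$ is deferred to Corollaries \ref{case1lemma}--\ref{case2lemma} — so the proof is correct and takes essentially the same route as the paper.
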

\begin{proof}
	Multiplying $h\left(u^{n-1}+u^{n} \right)^TQ$ on the both sides of \eqref{2Dmatrixform}, we get
	\begin{align*}
	&\frac1{\tau}h\left(u^{n-1}+u^{n} \right)^TQ\left(u^{n}-u^{n-1}\right)\\
	&=h\left(u^{n-1}+u^{n} \right)^T{QA}\left(u^{n-1}+u^{n} \right)+ h\left(u^{n-1}+u^{n} \right)^TQf^{n-\frac{1}{2}}.
	\end{align*}
	Since ${\cal H}(QA)$ is negative semi-definite, we have
	$$h\left(u^{n-1}+u^{n} \right)^T{QA}\left(u^{n-1}+u^{n} \right)=h\left(u^{n-1}+u^{n} \right)^T{\cal H}({QA})\left(u^{n-1}+u^{n} \right)\leq 0.$$
	Then it follows
	$$ h(u^{n})^TQu^{n}- h(u^{n-1})^TQu^{n-1}\le\tau h(u^{n})^TQf^{n-\frac{1}{2}}
	+\tau h(u^{n-1})^TQf^{n-\frac{1}{2}}.$$
	The rest of the proof is similar to that in Theorem \ref{stable-1D}.
\end{proof}

With Theorem \ref{stable-2D-1}, the convergence of scheme \eqref{2Dmatrixform} can be directly obtained:
\begin{theorem}\label{convergence2D-1}
	Let $u(x_i,y_j,t_n)$ be the exact solution of \eqref{fde} and smooth
	enough, $u_{i,j}^n$ be the solution of finite difference scheme \eqref{2Dmatrixform}. Denote $e_{i,j}^n=u(x_i,y_j,t_n)-u_{i,j}^n$, $0\leq i\leq M_1+1$, $0\leq j\leq M_2+1$, $0\leq n\leq N$. For any $Q\in{\mathfrak{D}}$, there exists a positive constant $c_4$ such that
	$$\|e^n\|\leq c_4(\tau^2+h_1^2+h_2^2).$$
\end{theorem}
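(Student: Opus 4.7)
The plan is to mimic the one-dimensional convergence proof (Theorem \ref{convergence1D}) but now in the two-dimensional setting, using Theorem \ref{stable-2D-1} as the workhorse. First I would subtract the scheme \eqref{2Dmatrixform} from the consistent version of the PDE evaluated at grid points; since the numerical solution satisfies \eqref{2Dmatrixform} exactly and the exact solution satisfies \eqref{2d-scheme} up to the truncation term $\hat{R}_{i,j}^{n-\frac12}$, the error vector $e^n=[e_{1,1}^n,\ldots,e_{M_1,M_2}^n]^{T}$ satisfies the same Crank--Nicolson-type linear recursion as $u^n$, driven by $\hat{R}^{n-\frac12}$ in place of $f^{n-\frac12}$, with zero initial and boundary data:
\begin{equation*}
\frac{1}{\tau}(e^n-e^{n-1})=A(e^{n-1}+e^n)+\hat{R}^{n-\frac12},\qquad e^0={\bf 0}.
\end{equation*}

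Next, I would directly apply Theorem \ref{stable-2D-1} to this error equation. Because $\varphi$ is replaced by $0$ and the source by $\hat{R}^{n-\frac12}$, the stability estimate collapses to
\begin{equation*}
\|e^n\|_{Q}^{2}\leq [\exp(2T)-1]\max_{1\leq k\leq n}\|\hat{R}^{k-\frac12}\|_{Q}^{2}.
\end{equation*}
The pointwise truncation bound $|\hat{R}_{i,j}^{n-\frac12}|\leq c_3(\tau^2+h_1^2+h_2^2)$ together with $\|\hat{R}^{k-\frac12}\|^2=h_1h_2\sum_{i,j}|\hat{R}_{i,j}^{k-\frac12}|^2$ (adapting the discrete $L^2$ norm to the 2D mesh) immediately gives $\|\hat{R}^{k-\frac12}\|\leq \tilde{c}(\tau^2+h_1^2+h_2^2)$ for some constant $\tilde{c}$ depending only on $c_3$ and the area of $\Omega$.

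The final step, and the only place a little care is required, is to pass from the weighted norm $\|\cdot\|_{Q}$ back to the unweighted discrete $L^2$ norm $\|\cdot\|$. Here the defining property of the class $\mathfrak{D}$ is essential: $Q\succ{\bf O}$ and $\mathrm{cond}(Q)\leq c$ uniformly in $\tau,h_1,h_2$. By Lemma \ref{bilinear_inequality} applied to $Q$ and to $Q^{-1}$, we have
\begin{equation*}
\lambda_{\min}(Q)\,\|v\|^{2}\leq \|v\|_{Q}^{2}\leq \lambda_{\max}(Q)\,\|v\|^{2},
\end{equation*}
so $\|e^n\|^2\leq \lambda_{\min}(Q)^{-1}\|e^n\|_{Q}^{2}$ and $\|\hat{R}^{k-\frac12}\|_{Q}^{2}\leq \lambda_{\max}(Q)\|\hat{R}^{k-\frac12}\|^2$, giving
\begin{equation*}
\|e^n\|^{2}\leq \mathrm{cond}(Q)\,[\exp(2T)-1]\max_{1\leq k\leq n}\|\hat{R}^{k-\frac12}\|^{2}\leq c_{4}^{2}(\tau^2+h_1^2+h_2^2)^2,
\end{equation*}
with $c_4$ independent of the step sizes precisely because $\mathrm{cond}(Q)\leq c$. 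The main (very mild) obstacle is ensuring that the norm-equivalence constants do not blow up with refinement; this is exactly what the uniform conditioning hypothesis in the definition of $\mathfrak{D}$ is designed to rule out, so no further work is needed.
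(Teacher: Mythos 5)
Your argument is correct and is essentially the paper's own route: the paper gives no separate proof, stating only that the result follows directly from Theorem \ref{stable-2D-1}, i.e., write the error equation with zero initial data and the truncation term $\hat{R}^{n-\frac12}$ as source, apply the $Q$-norm stability estimate, and convert back to the discrete $L^2$ norm (the exact analogue of the $D^{-1}$-norm conversion in Theorem \ref{convergence1D}). Your observation that only the ratio $\lambda_{\max}(Q)/\lambda_{\min}(Q)=\mathrm{cond}(Q)\leq c$ is needed for the uniform norm equivalence correctly supplies the step the paper leaves implicit.
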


The remaining and important thing is to give the feature of the set ${\mathfrak{D}}$. However, it seems difficult to depict all the elements of ${\mathfrak{D}}$. In the following Corollaries \ref{case1lemma} and \ref{case2lemma}, we show that there are some matrices belong to ${\mathfrak{D}}$ when the variable coefficients $d(x,y),e(x,y)$ satisfy some certain conditions, this ensures that ${\mathfrak{D}}$ is not an empty set which is necessary for the stability and convergence. We discuss the existence of those matrices in two cases:
\\
$\bullet$ {\bf Case 1} When $d(x,y),e(x,y)$ are separable respect to $x$ and $y$.

In this case, we denote $d(x,y)={\tilde d}(x){\hat d}(y)$ and $e(x,y)={\tilde e}(x){\hat e}(y)$, and take
\begin{align*}
&{\tilde D}=\diag({\tilde d}_1,{\tilde d}_2,\ldots,{\tilde d}_{M_1}),~~{\hat D}=({\hat d}_1,{\hat d}_2,\ldots,{\hat d}_{M_2}),\\
&{\tilde E}=\diag({\tilde e}_1,{\tilde e}_2,\ldots,{\tilde e}_{M_1}),~~{\hat E}=({\hat e}_1,{\hat e}_2,\ldots,{\hat e}_{M_2}).
\end{align*}
Then $D={\hat D}\otimes{\tilde D}$, $E={\hat E}\otimes{\tilde E}$.
\begin{corollary}\label{case1lemma}
	If ${\tilde d}_-\leq{\tilde d}(x)\leq {\tilde d}_+$ and ${\hat e}_-\leq{\hat e}(y)\leq {\hat e}_+$ for some positive constants ${\tilde d}_-,{\tilde d}_+,{\hat e}_-$ and ${\hat e}_+$, then ${\hat E}^{-1}\otimes{\tilde D}^{-1}\in{\mathfrak{D}}$.
\end{corollary}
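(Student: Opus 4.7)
The plan is to verify the three defining conditions of $\mathfrak{D}$ directly for $X = \hat{E}^{-1}\otimes\tilde{D}^{-1}$. Since both $\tilde{D}$ and $\hat{E}$ are positive diagonal matrices by hypothesis, $X$ is positive definite diagonal, so the first condition $X\succ\mathbf{O}$ is immediate. For the condition number, I use the identity $\cond(X_1\otimes X_2)=\cond(X_1)\cond(X_2)$ (or just compute singular values directly) to obtain $\cond(X)=\tfrac{\tilde{d}_+\hat{e}_+}{\tilde{d}_-\hat{e}_-}$, which is independent of $\tau,h_1,h_2$, giving the third condition.

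The heart of the argument is showing $-\mathcal{H}(XA)\succeq\mathbf{O}$. Using the mixed-product property $(X_1\otimes X_2)(Y_1\otimes Y_2)=(X_1Y_1)\otimes(X_2Y_2)$ together with the separability $D=\hat{D}\otimes\tilde{D}$, $E=\hat{E}\otimes\tilde{E}$, I would compute
\begin{equation*}
XA=\frac{1}{2h_1^{\alpha}}(\hat{E}^{-1}\hat{D})\otimes G_{\alpha}+\frac{1}{2h_2^{\beta}}G_{\beta}\otimes(\tilde{D}^{-1}\tilde{E}).
\end{equation*}
Because $\hat{E}^{-1}\hat{D}$ and $\tilde{D}^{-1}\tilde{E}$ are diagonal (hence symmetric), the symmetric part distributes across each Kronecker factor, yielding
\begin{equation*}
\mathcal{H}(XA)=\frac{1}{2h_1^{\alpha}}(\hat{E}^{-1}\hat{D})\otimes\mathcal{H}(G_{\alpha})+\frac{1}{2h_2^{\beta}}\mathcal{H}(G_{\beta})\otimes(\tilde{D}^{-1}\tilde{E}).
\end{equation*}

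Next I would observe that $\hat{E}^{-1}\hat{D}$ and $\tilde{D}^{-1}\tilde{E}$ are diagonal with nonnegative entries: indeed, since $d(x,y)=\tilde{d}(x)\hat{d}(y)\geq0$ and $\tilde{d}>0$, we must have $\hat{d}\geq0$, so $\hat{E}^{-1}\hat{D}\succeq\mathbf{O}$; analogously $\tilde{D}^{-1}\tilde{E}\succeq\mathbf{O}$. By Lemma \ref{Galpha}, $\mathcal{H}(G_{\alpha})$ and $\mathcal{H}(G_{\beta})$ are negative definite. Since the eigenvalues of a Kronecker product of symmetric matrices are the pairwise products of eigenvalues, the Kronecker product of a PSD matrix and an NSD matrix is NSD. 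Thus each summand is negative semidefinite, and so is $\mathcal{H}(XA)$, which gives $-\mathcal{H}(XA)\succeq\mathbf{O}$.

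There is no serious obstacle here; the proof is a straightforward application of the mixed-product property plus Lemma \ref{Galpha}. The only point requiring a touch of care is checking that $\hat{d}\geq0$ and $\tilde{e}\geq0$ follow from the overall nonnegativity of $d$ and $e$ combined with the strict positivity assumed on $\tilde{d}$ and $\hat{e}$, since this is what makes the diagonal factors in the Kronecker sum genuinely positive semidefinite rather than indefinite.
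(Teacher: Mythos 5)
Your proof is correct and follows essentially the same route as the paper: both compute $\mathcal{H}\bigl((\hat{E}^{-1}\otimes\tilde{D}^{-1})A\bigr)=\frac{1}{4h_1^{\alpha}}(\hat{E}^{-1}\hat{D})\otimes(G_{\alpha}+G_{\alpha}^{T})+\frac{1}{4h_2^{\beta}}(G_{\beta}+G_{\beta}^{T})\otimes(\tilde{D}^{-1}\tilde{E})$ via the mixed-product property and conclude negative semidefiniteness from Lemma \ref{Galpha}. Your explicit verification of the positivity and condition-number requirements of $\mathfrak{D}$, and of $\hat{d}\geq 0$, $\tilde{e}\geq 0$, only fills in details the paper leaves implicit.
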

\begin{proof}
	We have $A=\frac{1}{2h_1^\alpha}({\hat D}\otimes {\tilde D}G_\alpha)+\frac{1}{2h_2^\beta}({\hat E}G_\beta\otimes {\tilde E})$, then
	$${\cal H}\left(({\hat E}^{-1}\otimes{\tilde D}^{-1})A\right)=\frac{1}{4h_1^\alpha}\left({\hat E}^{-1}{\hat D}\otimes (G_\alpha+G_\alpha^T)\right)+\frac{1}{4h_2^\beta}\left((G_\beta+G_\beta^T)\otimes {\tilde D}^{-1}{\tilde E}\right),$$
	which is negative semi-definite. Thus ${\hat E}^{-1}\otimes{\tilde D}^{-1}\in{\mathfrak{D}}$.
\end{proof}
$\bullet$ {\bf Case 2} {When $d(x,y)$ and $e(x,y)$ are non-separable.}

As in Lemma \ref{generatingfunction_rario}, we denote
$$\varsigma_\beta\triangleq\min_{x}\frac{\Re[-g(\beta,x)]}{|g(\beta,x)|}=\left|\cos\left(\frac{\beta}{2}\pi\right)\right|,$$
where $g(\beta,x)$ is the generating function of matrix $G_\beta$.

\begin{corollary}\label{case2lemma}\mbox{}\par
	\begin{description}
		\item[(i)] Assume that
		$$0\leq\kappa_{min}^d(y)\leq d(x,y)\leq\kappa_{max}^d(y)<\infty~\mbox{for every $(x,y)$},$$ $$0\leq\kappa_{min}^e(x)\leq e(x,y)\leq\kappa_{max}^e(x)<\infty~\mbox{for every $(x,y)$}.$$
		Then, $I\in{\mathfrak{D}}$ if the following conditions are fulfilled
		\begin{align}\label{cond2-1}
		&\kappa^d(y)-\frac{\sqrt{2}\big(\kappa_{max}^d(y)-\kappa_{min}^d(y)\big)}{\varsigma_\alpha}\geq0~\mbox{for every $y$},\\\label{cond2-11}
		&\kappa^e(x)-\frac{\sqrt{2}\big(\kappa_{max}^e(x)-\kappa_{min}^e(x)\big)}{\varsigma_\beta}\geq0~\mbox{for every $x$},
		\end{align}
		where $\kappa^d(y)=\kappa_{max}^d(y)$ when $d(x,y)$ is a concave function of $x$, $\kappa^d(y)=\kappa_{min}^d(y)$ when $d(x,y)$ is a convex function of $x$, $\kappa^e(x)=\kappa_{max}^e(x)$ when $e(x,y)$ is a concave function of $y$, $\kappa^e(x)=\kappa_{min}^e(x)$ when $e(x,y)$ is a convex function of $y$;
		\item[(ii)]  Assume that
		$$0\leq\kappa_{min}^{e'}(x)\leq \frac{e(x,y)}{d(x,y)}\leq\kappa_{max}^{e'}(x)<\infty \quad \mbox{with}\quad  0<d(x,y)<\infty.$$
		Then $D^{-1}\in{\mathfrak{D}}$ if the following condition is fulfilled
		\begin{equation}\nonumber \kappa^{e'}(x)-\frac{\sqrt{2}\big(\kappa_{max}^{e'}(x)-\kappa_{min}^{e'}(x)\big)}{\varsigma_\beta}\geq0,
		\end{equation}
		where $\kappa^{e'}(x)=\kappa_{max}^{e'}(x)$ when $\frac{e(x,y)}{d(x,y)}$ is a concave function of $y$, and $\kappa^{e'}(x)=\kappa_{min}^{e'}(x)$ when $\frac{e(x,y)}{d(x,y)}$ is a convex function of $y$;
		\item[(iii)] Assume that
		$$0\leq\kappa_{min}^{d'}(y)\leq \frac{d(x,y)}{e(x,y)}\leq\kappa_{max}^{d'}(y)<\infty \quad \mbox{with}\quad  0<e(x,y)<\infty.$$
		Then $E^{-1}\in{\mathfrak{D}}$ if the following condition is fulfilled
		\begin{equation}\nonumber \kappa^{d'}(y)-\frac{\sqrt{2}\big(\kappa_{max}^{d'}(y)-\kappa_{min}^{d'}(y)\big)}{\varsigma_\alpha}\geq0,
		\end{equation}
		where $\kappa^{d'}(y)=\kappa_{max}^{d'}(y)$ when $\frac{d(x,y)}{e(x,y)}$ is a concave function of $x$, and $\kappa^{d'}(y)=\kappa_{min}^{d'}(y)$ when $\frac{d(x,y)}{e(x,y)}$  is a convex function of $x$.
	\end{description}
\end{corollary}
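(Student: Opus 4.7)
The plan is to verify the three membership claims in the corollary by exploiting the Kronecker structure of $A$ so that, for each choice of $X\in\{I,D^{-1},E^{-1}\}$, the quadratic form $\mathbf{u}^T(XA)\mathbf{u}$ decouples into independent one-dimensional sub-forms to which Theorem~\ref{key-estimate} can be applied. The positivity $X\succ{\bf O}$ is immediate in every case; the nontrivial points are to establish $-{\cal H}(XA)\succeq{\bf O}$ and to bound $\cond(X)$ uniformly in the discretization parameters.

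The key bookkeeping lemma is the following. For any $\mathbf{u}=[u_{i,j}]$, write $\mathbf{u}_{\cdot,j}=[u_{1,j},\ldots,u_{M_1,j}]^T$ and $\mathbf{u}_{i,\cdot}=[u_{i,1},\ldots,u_{i,M_2}]^T$. A direct computation from the block structure of the Kronecker products gives
\begin{align*}
\mathbf{u}^T D(I\otimes G_\alpha)\mathbf{u} &= \sum_{j=1}^{M_2}\mathbf{u}_{\cdot,j}^T D_j G_\alpha\mathbf{u}_{\cdot,j},\\
\mathbf{u}^T E(G_\beta\otimes I)\mathbf{u} &= \sum_{i=1}^{M_1}\mathbf{u}_{i,\cdot}^T E_i G_\beta\mathbf{u}_{i,\cdot},
\end{align*}
where $D_j=\diag(d(x_1,y_j),\ldots,d(x_{M_1},y_j))$ and $E_i=\diag(e(x_i,y_1),\ldots,e(x_i,y_{M_2}))$. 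The same style of decomposition survives after absorbing $D^{-1}$ or $E^{-1}$, since $D^{-1}E$ and $E^{-1}D$ remain diagonal.

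For part~(i), the one-dimensional function $d(\cdot,y_j)$ lies in $[\kappa_{\min}^d(y_j),\kappa_{\max}^d(y_j)]$ and is convex or concave by hypothesis, so the lower estimate of Theorem~\ref{key-estimate} combined with \eqref{cond2-1} yields $D_j G_\alpha+G_\alpha^T D_j\preceq{\bf O}$; a symmetric argument on $e(x_i,\cdot)$ using \eqref{cond2-11} gives $E_i G_\beta+G_\beta^T E_i\preceq{\bf O}$. Summing in $j$ and $i$ and adding the two contributions yields ${\cal H}(A)\preceq{\bf O}$; together with $\cond(I)=1$ this proves $I\in{\mathfrak{D}}$. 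Part~(ii) follows from the splitting $D^{-1}A=\frac{1}{2h_1^\alpha}(I\otimes G_\alpha)+\frac{1}{2h_2^\beta}(D^{-1}E)(G_\beta\otimes I)$: the first summand is negative semidefinite by Lemma~\ref{Galpha}, while for the second the diagonal blocks $R_i=\diag((e/d)(x_i,y_1),\ldots,(e/d)(x_i,y_{M_2}))$ satisfy the one-dimensional hypothesis of Theorem~\ref{key-estimate} in the $y$-variable, producing $R_i G_\beta+G_\beta^T R_i\preceq{\bf O}$. Part~(iii) is fully symmetric: one splits $E^{-1}A=\frac{1}{2h_1^\alpha}(E^{-1}D)(I\otimes G_\alpha)+\frac{1}{2h_2^\beta}(G_\beta\otimes I)$ and applies Theorem~\ref{key-estimate} to $(d/e)(\cdot,y_j)$ in the $x$-variable.

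The main subtlety I expect is the condition-number requirement built into ${\mathfrak{D}}$. For $X=I$ this is trivial, but for $X\in\{D^{-1},E^{-1}\}$ one needs $d$ or $e$ to be uniformly bounded above and away from zero on the grid independently of $h_1,h_2$. This follows from the stated positivity and boundedness: since $d,e$ are continuous and positive on the closure $\bar\Omega$, they attain uniform positive extrema, giving $\cond(D^{-1})\leq d_+/d_-$ and $\cond(E^{-1})\leq e_+/e_-$. Combining these uniform bounds with the semidefiniteness estimates above completes the verification that $I,D^{-1},E^{-1}$ lie in ${\mathfrak{D}}$ under the respective hypotheses.
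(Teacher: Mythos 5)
Your proof is correct and follows essentially the same route as the paper: both reduce the negative semidefiniteness of ${\cal H}(XA)$ to the one-dimensional estimate of Theorem \ref{key-estimate} applied direction-by-direction (your explicit slice-wise decomposition into blocks $D_jG_\alpha+G_\alpha^TD_j$ and $E_iG_\beta+G_\beta^TE_i$ is just a cleaner packaging of the paper's splitting $D=K^d\otimes I+\tilde D$, $E=I\otimes K^e+\tilde E$, and your treatment of (ii)--(iii) via $D^{-1}E$ and $E^{-1}D$ matches the paper's ``similarly'' argument). A minor plus: you also address the $\cond(X)$ clause in the definition of ${\mathfrak{D}}$, which the paper's proof passes over in silence.
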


\begin{proof}
	We firstly prove ${\bf (i)}$. Denote
	$$K^d=\diag\big(k^d(y_1),k^d(y_2),\ldots,k^d(y_{M_2})\big),\quad
	K^e=\diag\big(k^e(x_1),k^e(x_2),\ldots,k^e(x_{M_1})\big).$$
	Take ${\tilde D}=D-K^d\otimes I$ and ${\tilde E}=E-I\otimes K^e$.
	Then
	$$A=\frac1{2h^\alpha}\left[(K^d\otimes G_\alpha)+{\tilde D}(I\otimes G_\alpha)\right]+\frac1{2h^\beta}\left[(G_\beta\otimes K^e)+{\tilde E}(G_\beta\otimes I)\right].$$
	For any ${\bf u}=[u_{1,1},u_{2,1},\ldots,u_{M_1,1},u_{1,2},\ldots,u_{M_1,2},
	\ldots,u_{1,M_2},\ldots,u_{M_1,M_2}]^T$, we have
	\begin{align*}
	2{\bf u}^T{\cal H}(A){\bf u}
	=&\frac1{2h^\alpha}\left[{\bf u}^T\big(K^d\otimes (G_\alpha+G_\alpha^T)\big){\bf u}+{\bf u}^T\left({\tilde D}(I\otimes G_\alpha)+(I\otimes G_\alpha^T){\tilde D}\right){\bf u}\right]\\
	&+\frac1{2h^\beta}\left[{\bf u}^T\big((G_\beta+G_\beta^T)\otimes K^e\big){\bf u}+{\bf u}^T\left({\tilde E}(G_\beta\otimes I)+(G_\beta^T\otimes I){\tilde E}\right){\bf u}\right].
	\end{align*}
	Referring to the proof of Theorem \ref{key-estimate}, it is easy to obtain
	\begin{align*}
	\left|{\bf u}^T\left({\tilde D}(I\otimes G_\alpha)+(I\otimes G_\alpha^T){\tilde D}\right){\bf u}\right|\le& \frac{-\sqrt{2}}{\varsigma_\alpha}{\bf u}^T\left(K_\alpha\otimes (G_\alpha+G_\alpha^T)\right){\bf u},\\
	\left|{\bf u}^T\left({\tilde E}(G_\beta\otimes I)+(G_\beta^T\otimes I){\tilde E}\right){\bf u}\right|\le& \frac{-\sqrt{2}}{\varsigma_\beta}{\bf u}^T\left((G_\beta+G_\beta^T)\otimes K_\beta\right){\bf u},
	\end{align*}
	where
	$$K_\alpha=\diag\Big(\kappa_{max}^d(y_1)-\kappa_{min}^d(y_1),
	\kappa_{max}^d(y_2)-\kappa_{min}^d(y_2),\ldots,\kappa_{max}^d(y_{M_2})-\kappa_{min}^d(y_{M_2}) \Big),$$
	$$K_\beta=\diag\Big(\kappa_{max}^e(x_1)-\kappa_{min}^e(x_1),
	\kappa_{max}^e(x_2)-\kappa_{min}^e(x_2),\ldots,\kappa_{max}^e(x_{M_1})-\kappa_{min}^e(x_{M_1}) \Big).$$
	So
	\begin{align*}
	-2{\bf u}^T{\cal H}(A){\bf u}\geq&\frac1{2h^\alpha}{\bf u}^T\left(\Big(K^d-\frac{\sqrt{2}}{\varsigma_\alpha}K_\alpha\Big)\otimes (-G_\alpha-G_\alpha^T)\right){\bf u}\\
	&+\frac1{2h^\beta}
	{\bf u}^T\left((-G_\beta-G_\beta^T)\otimes \Big(K^e-\frac{\sqrt{2}}{\varsigma_\beta}K_\beta\Big)\right){\bf u}.
	\end{align*}
	Which implies that ${\cal H}(A)$ is negative semi-definite if the conditions in \eqref{cond2-1}-\eqref{cond2-11} hold. Hence $I\in{\mathfrak{D}}$.
	
	Similarly, one can show ${\bf (ii)}$ and ${\bf (iii)}$.
\end{proof}

\subsection{The Two-Dimensional Toeplitz Preconditioner}
In this subsection, we extend the Toeplitz preconditioner to two-dimensional case. To solve \eqref{2Dmatrixform} is equivalent to solve the following $N$ linear systems
\begin{equation}\label{2dlinsys}
{\bf A}{u}^n={b}^n,\quad n=1,2,...,N,
\end{equation}
where $I_k$ denotes the $k\times k$ identity, ${\bf A}={I}_{\hat{M}}+{D}{B}_x+{E}{B}_y$, ${B}_x=-\eta_x({I}_{M_2}\otimes{G}_{\alpha})$, ${B}_y=-\eta_y({G}_{\beta}\otimes{I}_{M_1})$ $\hat{M}=M_1M_2$, $\eta_x=\tau/(2h_1^{\alpha})$, $\eta_y=\tau/(2h_2^{\beta})$, ${\bf b}^{n}=({I}_{\hat{M}}-{D}{B}_x-{E}{B}_y){u}^{n-1}+\tau{\bf f}^{n-\frac{1}{2}}$.
Our two-level Toeplitz preconditioner for preconditioning  \eqref{2dlinsys} is defined as follows
\begin{equation}\label{2dpreconditioner}
{\bf P}={I}_{\hat{M}}+\bar{d}{B}_x+\bar{e}{B}_{y},
\end{equation}
where $\bar{d}={\tt mean}({D})$, $\bar{e}={\tt mean}({E})$.
The preconditioned Krylov subspace method with preconditioner ${\bf P}$ is employed to solve the linear systems in \eqref{2dlinsys}. Hence, in each iteration, it requires to compute some matrix-vector multiplications like ${\bf P}^{-1}{\bf z}$ for some randomly given ${\bf z}$, i.e., it requires to solve the linear system of the form
\begin{equation}\label{implmtsys}
{\bf P}{\bf x}={\bf z}.
\end{equation}
Next, we introduce a multigrid method to solve \eqref{implmtsys}.

For the choices of coarse-gird matrices, interpolation and restriction, we refer to the geometric grid coarsening, piecewise linear interpolation and its transpose. For the choice of pre-smoothing iteration, we refer to the block Jacobi iteration, i.e.,
\begin{equation}\label{presmoother}
{\bf x}^{k+1}={\bf x}^{k}+{\bf T}_x^{-1}({\bf z}-{\bf P}{\bf x}^{k}),
\end{equation}
where ${\bf T}_x={I}_{\hat{M}}+\bar{d}{B}_x$ is the block diagonal part of ${\bf P}$, ${\bf x}^{k}$ is an initial guess of ${\bf x}$ in \eqref{implmtsys}. Since ${\bf T}_x$ is a block diagonal matrix with identical Toeplitz blocks, its inversion, ${\bf T}_x^{-1}$ can be computed efficiently with the help of Gohberg-Semencul-type formula as discussed in Section 2. For the choice of post-smoother, we refer to the block Jacobi iteration for the permuted linear system, i.e,
\begin{equation}\label{postsmoother}
{\bf x}^{k+1}={\bf x}^{k}+{\bf T}_y^{-1}({\bf z}-{\bf P}{\bf x}^{k}),
\end{equation}
where ${\bf T}_y={I}_{\hat{M}}+\bar{e}{B}_y$, ${\bf x}^{k}$ is an initial guess of ${\bf x}$ in \eqref{implmtsys}. One can easily find a $x$-$y$ ordering permutation  matrix ${\bf Q}$ such that
\begin{equation}\label{defpermut}
{\bf T}_y={\bf Q}^{\rm T}({I}_{\hat{M}}-\bar{e}\eta_y{I}_{M_1}\otimes{G}_{\beta}){\bf Q}.
\end{equation}
Thus, ${\bf T}_y^{-1}={\bf Q}^{\rm T}({I}_{\hat{M}}-\bar{e}\eta_y{I}_{M_1}\otimes{G}_{\beta})^{-1}{\bf Q}$, which means the implementation of \eqref{postsmoother} still requires to compute an inversion of a block diagonal matrix with identical Toeplitz blocks. Therefore, \eqref{postsmoother} can still be fast implemented using the Gohberg-Semencul-type formula.
Similar to proof of Proposition \ref{1dinvertibility}, one can prove the following proposition.
\begin{proposition}\label{2dinvertibility}
	${\bf P}$ defined in \eqref{2dpreconditioner} is invertible for any $\alpha\in(1,2)$.
\end{proposition}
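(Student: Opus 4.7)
The plan is to follow the 1D template from Proposition \ref{1dinvertibility} verbatim, relying on the fact that Lemma \ref{Galpha} applies independently to $G_\alpha$ and $G_\beta$, and that Kronecker products preserve definiteness when one factor is the identity. The strategy is to show that the symmetric part ${\bf P}+{\bf P}^{\rm T}$ is Hermitian positive definite, and then invoke Lemma \ref{stabllem} to conclude that every eigenvalue of ${\bf P}$ has strictly positive real part, hence is nonzero.

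First I would substitute the definitions $B_x=-\eta_x(I_{M_2}\otimes G_\alpha)$ and $B_y=-\eta_y(G_\beta\otimes I_{M_1})$ into \eqref{2dpreconditioner} and compute
\begin{equation*}
{\bf P}+{\bf P}^{\rm T}=2I_{\hat{M}}-\bar{d}\,\eta_x\bigl(I_{M_2}\otimes(G_\alpha+G_\alpha^{\rm T})\bigr)-\bar{e}\,\eta_y\bigl((G_\beta+G_\beta^{\rm T})\otimes I_{M_1}\bigr).
\end{equation*}
By Lemma \ref{Galpha} applied to both $G_\alpha$ and $G_\beta$, the matrices $-(G_\alpha+G_\alpha^{\rm T})$ and $-(G_\beta+G_\beta^{\rm T})$ are symmetric positive definite. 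Since Kronecker products of symmetric positive (semi)definite matrices are symmetric positive (semi)definite, and $\bar{d},\bar{e}\geq 0$ together with $\eta_x,\eta_y>0$, the two correction terms are symmetric positive semidefinite, so their sum with the positive definite $2I_{\hat{M}}$ is symmetric positive definite.

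Next, applying Lemma \ref{stabllem} to ${\bf P}$ yields $\{\Re(\lambda)\mid\lambda\in\sigma({\bf P})\}\subset(0,+\infty)$, so $0\notin\sigma({\bf P})$ and ${\bf P}$ is invertible. The argument works for every $\alpha,\beta\in(1,2)$ because the negative definiteness supplied by Lemma \ref{Galpha} is uniform in the fractional order within this range.

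I do not anticipate a genuine obstacle here: the only subtlety is noting that one does not need $\bar{d}$ or $\bar{e}$ to be strictly positive, since the positive definiteness of ${\bf P}+{\bf P}^{\rm T}$ is already guaranteed by the identity term $2I_{\hat{M}}$. Thus the structure of the 1D proof transfers without modification once the Kronecker algebra is written out.
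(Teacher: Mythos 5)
Your proposal is correct and follows essentially the same route as the paper, which simply invokes the one-dimensional argument (positive definiteness of ${\bf P}+{\bf P}^{\rm T}$ via Lemma \ref{Galpha}, then Lemma \ref{stabllem}); your only addition is writing out the Kronecker algebra explicitly, which is exactly the intended adaptation.
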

%

\begin{theorem}\label{2dclstthm}
	Let $d(x,y)\equiv\nu_1a(x,y)$ and $e(x,y)\equiv\nu_2a(x,y)$ for any $(x,y)\in\Omega$ with nonnegative constants $\nu_1$ and $\nu_2$. Assume
	\begin{description}
		\item[(i)] $a(x,y)\in[\check{a},\hat{a}]$ with $\check{a}>0$ for any $(x,y)\in\Omega$,
		\item[(ii)]For any $x\in(x_{L},x_{R})$, $a(x,\cdot)$ is convex or concave on $y\in(y_{L},y_{R})$; and for any $y\in(y_{L},y_{R})$, $a(\cdot,y)$ is convex or concave on $x\in(x_L,x_R)$.
		\item[(iii)] $\check{c}_{1}=\inf\limits_{y\in(y_L,y_R)}[\tilde{\mathcal{M}}_1(y)-\sqrt{2}(\hat{\mathcal{M}}_1(y)-\check{\mathcal{M}}_1(y))/\varsigma_{\alpha}]>0$ with $\hat{\mathcal{M}}_1(y)=:\sup\limits_{x\in(x_L,x_R)}a(x,y)$ and $\check{\mathcal{M}}_1(y)=:\inf\limits_{x\in(x_L,x_R)}a(x,y)$,
		$$
		\tilde{\mathcal{M}}_1(y)=\begin{cases}
		\check{\mathcal{M}}_1(y),\quad {\rm if}~ a(\cdot,y){\rm~is~convex},\\
		\hat{\mathcal{M}}_1(y),\quad{\rm if}~ a(\cdot,y){\rm~is~concave},
		\end{cases}
		$$
		$\check{c}_{2}=\inf\limits_{x\in(x_L,x_R)}[\tilde{\mathcal{M}}_2(x)-\sqrt{2}(\hat{\mathcal{M}}_2(x)-\check{\mathcal{M}}_2(x))/\varsigma_{\beta}]>0$ with $\hat{\mathcal{M}}_2(x)=:\sup\limits_{y\in(y_L,y_R)}a(x,y)$ and $\check{\mathcal{M}}_2(x)=:\inf\limits_{y\in(y_L,y_R)}a(x,y)$,
		$$
		\tilde{\mathcal{M}}_2(x)=\begin{cases}
		\check{\mathcal{M}}_2(x),\quad {\rm if}~ a(x,\cdot){\rm~is~convex},\\
		\hat{\mathcal{M}}_2(x),\quad{\rm if}~ a(x,\cdot){\rm~is~concave}.
		\end{cases}
		$$
	\end{description}
	Then, for any positive integers, $N$, $M_1$ and $M_2$, it holds $\Sigma^2({\bf A}{\bf P}^{-1})\subset[\check{s},\hat{s}]$ and thus $$\sup\limits_{M_1,M_2,N\geq 1}\cond({\bf A}{\bf P}^{-1})\leq\sqrt{\hat{s}/\check{s}},$$
	where $\check{s}$, $\hat{s}$ are positive constants independent of $\tau$, $h_1$ and $h_2$:
	\begin{align*}
	&\check{s}=\min\left\{\frac{\check{c}_1}{\hat{a}},\frac{\check{c}_2}{\hat{a}},\frac{\check{a}^2}{\hat{a}^2}\right\},~\hat{s}=\max\left\{\frac{\hat{c}_1}{\check{a}},\frac{\hat{c}_2}{\check{a}},\frac{\hat{a}^2}{\check{a}^2}\right\},\\
	&\hat{c}_1=\sup\limits_{y\in(y_L,y_R)}\Big[\tilde{\mathcal{M}}_1(y)+\frac{\sqrt{2}}{\varsigma_{\alpha}}(\hat{\mathcal{M}}_1(y)-\check{\mathcal{M}}_1(y))\Big],\\
	&\hat{c}_2=\sup\limits_{x\in(x_L,x_R)}\Big[\tilde{\mathcal{M}}_2(x)+\frac{\sqrt{2}}{\varsigma_{\beta}}(\hat{\mathcal{M}}_2(x)-\check{\mathcal{M}}_2(x))\Big].
	\end{align*}
\end{theorem}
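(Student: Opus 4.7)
The plan is to leverage the factorization $d=\nu_1 a$, $e=\nu_2 a$ to collapse the two-dimensional preconditioning analysis onto the same structural skeleton used in Theorem \ref{1dcondesti}. Setting $A_0 = \diag(a(x_i,y_j))$ (in the same $(i,j)$-lexicographic ordering as $D,E$) and $\mathcal{B} = -\nu_1\eta_x(I_{M_2}\otimes G_\alpha) - \nu_2\eta_y(G_\beta\otimes I_{M_1})$, we immediately get $\mathbf{A} = I_{\hat M} + A_0\mathcal{B}$; and because $\bar{d}=\nu_1\bar{a}$, $\bar{e}=\nu_2\bar{a}$ with $\bar{a}=\mathtt{mean}(A_0)\in[\check{a},\hat{a}]$, also $\mathbf{P} = I_{\hat M} + \bar{a}\mathcal{B}$. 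Squaring yields
$$\mathbf{A}^{\rm T}\mathbf{A} = I + (A_0\mathcal{B}+\mathcal{B}^{\rm T}A_0) + \mathcal{B}^{\rm T}A_0^2\mathcal{B},\qquad \mathbf{P}^{\rm T}\mathbf{P} = I + \bar{a}(\mathcal{B}+\mathcal{B}^{\rm T}) + \bar{a}^2\mathcal{B}^{\rm T}\mathcal{B},$$
with the identity $\mathcal{B}+\mathcal{B}^{\rm T} = \nu_1\eta_x(I\otimes G_1) + \nu_2\eta_y(G_2\otimes I)$, where $G_1=-G_\alpha-G_\alpha^{\rm T}\succ\mathbf{O}$ and $G_2=-G_\beta-G_\beta^{\rm T}\succ\mathbf{O}$.

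The core step is to sandwich $A_0\mathcal{B}+\mathcal{B}^{\rm T}A_0$ inside the same cone spanned by $(I\otimes G_1)$ and $(G_2\otimes I)$. The $x$-contribution is block-diagonal in the $y$-slicing, with $j$-th block $-\nu_1\eta_x[A_{0,j}G_\alpha + G_\alpha^{\rm T}A_{0,j}]$, $A_{0,j}=\diag_i a(x_i,y_j)$. Assumption (ii) ensures $a(\cdot,y_j)$ is convex or concave, so Theorem \ref{key-estimate} applies slicewise, producing bounds governed by $\check{\mathcal{M}}_1(y_j),\hat{\mathcal{M}}_1(y_j),\tilde{\mathcal{M}}_1(y_j)$; the supremum/infimum over $j$ supplied by (iii) then gives $\check{c}_1 G_1 \preceq -A_{0,j}G_\alpha-G_\alpha^{\rm T}A_{0,j} \preceq \hat{c}_1 G_1$ uniformly in $j$. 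Reassembling the block-diagonal yields
$$\nu_1\check{c}_1\eta_x(I\otimes G_1) \preceq -\nu_1\eta_x\bigl[A_0(I\otimes G_\alpha)+(I\otimes G_\alpha^{\rm T})A_0\bigr] \preceq \nu_1\hat{c}_1\eta_x(I\otimes G_1).$$
The $y$-contribution is treated identically after conjugation by the permutation $\mathbf{Q}$ of \eqref{defpermut}, which rewrites $A_0(G_\beta\otimes I)+(G_\beta^{\rm T}\otimes I)A_0$ as block-diagonal in the $x$-slicing with $i$-th block built from $A_{0,i}^{(y)}=\diag_j a(x_i,y_j)$ and $G_\beta$; using (ii),(iii) for the $y$-direction gives the analogous bound with $\check{c}_2,\hat{c}_2$ and $(G_2\otimes I)$. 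Summing and combining with the trivial $\check{a}^2\mathcal{B}^{\rm T}\mathcal{B}\preceq\mathcal{B}^{\rm T}A_0^2\mathcal{B}\preceq\hat{a}^2\mathcal{B}^{\rm T}\mathcal{B}$ produces
$$I + \check{\Xi} + \check{a}^2\mathcal{B}^{\rm T}\mathcal{B} \preceq \mathbf{A}^{\rm T}\mathbf{A} \preceq I + \hat{\Xi} + \hat{a}^2\mathcal{B}^{\rm T}\mathcal{B},$$
with $\check{\Xi}=\nu_1\check{c}_1\eta_x(I\otimes G_1)+\nu_2\check{c}_2\eta_y(G_2\otimes I)$ and $\hat{\Xi}$ defined analogously from $\hat{c}_1,\hat{c}_2$.

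For any nonzero $\mathbf{y}\in\mathbb{R}^{\hat M}$, set $\mathbf{z}=\mathbf{P}^{-1}\mathbf{y}$ (well-defined by Proposition \ref{2dinvertibility}); then
$$\frac{\mathbf{y}^{\rm T}(\mathbf{A}\mathbf{P}^{-1})^{\rm T}(\mathbf{A}\mathbf{P}^{-1})\mathbf{y}}{\mathbf{y}^{\rm T}\mathbf{y}} = \frac{\mathbf{z}^{\rm T}\mathbf{A}^{\rm T}\mathbf{A}\mathbf{z}}{\mathbf{z}^{\rm T}\mathbf{P}^{\rm T}\mathbf{P}\mathbf{z}}$$
is, after the sandwich, a ratio of weighted sums of the four nonnegative scalars $\mathbf{z}^{\rm T}\mathbf{z}$, $\nu_1\eta_x\mathbf{z}^{\rm T}(I\otimes G_1)\mathbf{z}$, $\nu_2\eta_y\mathbf{z}^{\rm T}(G_2\otimes I)\mathbf{z}$, $\mathbf{z}^{\rm T}\mathcal{B}^{\rm T}\mathcal{B}\mathbf{z}$, whose coefficients in the numerator are between $\{1,\check{c}_1,\check{c}_2,\check{a}^2\}$ and $\{1,\hat{c}_1,\hat{c}_2,\hat{a}^2\}$, and in the denominator are $\{1,\bar{a},\bar{a},\bar{a}^2\}$. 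Proposition \ref{wghtsumbdlem} sandwiches the ratio termwise by the coefficient ratios; replacing $\bar{a}$ by $\hat{a}$ for lower bounds and by $\check{a}$ for upper bounds via $\bar{a}\in[\check{a},\hat{a}]$, and noting that $\check{a}^2/\hat{a}^2\le 1\le \hat{a}^2/\check{a}^2$ lets one discard the redundant $1$-to-$1$ ratio, delivers the claimed $\check{s},\hat{s}$, manifestly independent of $\tau,h_1,h_2,M_1,M_2,N$.

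The principal obstacle is the blockwise invocation of Theorem \ref{key-estimate}: one must verify that the natural $y$-slicing genuinely decomposes the $x$-contribution into independent 1D pieces whose per-slice diagonals satisfy the convex/concave hypothesis required by that theorem (different $y$-slices may be on different sides of the convex/concave dichotomy, but each slice is consistent with $\tilde{\mathcal{M}}_1(y_j)$); the same care is needed after conjugation by $\mathbf{Q}$ for the $y$-direction. The strict positivity of the final $\check{s}$ then follows from assumption (iii) giving $\check{c}_1,\check{c}_2>0$ together with $\check{a}>0$ in (i).
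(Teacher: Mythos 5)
Your proposal is correct and follows essentially the same route as the paper's proof: writing $\mathbf{A}=I_{\hat M}+A_0 W$ and $\mathbf{P}=I_{\hat M}+\bar a W$ with $W=\nu_1 B_x+\nu_2 B_y$, sandwiching the cross terms $A_0B_x+B_x^{\rm T}A_0$ and $A_0B_y+B_y^{\rm T}A_0$ blockwise via Theorem \ref{key-estimate} (the $y$-direction after conjugation by $\mathbf{Q}$), bounding $W^{\rm T}A_0^2W$ by $\check a^2,\hat a^2$, and concluding through the Rayleigh-quotient substitution $\mathbf{z}=\mathbf{P}^{-1}\mathbf{y}$ together with Proposition \ref{wghtsumbdlem} and $\bar a\in[\check a,\hat a]$. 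The "principal obstacle" you flag (per-slice convex/concave consistency with $\tilde{\mathcal{M}}_1(y_j)$, $\tilde{\mathcal{M}}_2(x_i)$) is handled exactly as in the paper, so no gap remains.
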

\begin{proof}
	Denote $${ D}_a=\diag(a_{1,1},a_{2,1},...,a_{M_1,1},a_{1,2},a_{2,2},...,a_{M_1,2},......,a_{1,M_2},a_{2,M_2},...,a_{M_1,M_2})$$ with $a_{i,j}=a(x_{i},y_{j})$. Also, denote $\bar{a}={\tt mean}({ D}_a)$.
	By straightforward calculation,
	\begin{align}
	{\bf A}^{\rm T}{\bf A}&={ I}_{\hat{M}}+\nu_1({ B}_x^{\rm T}{ D}_a+{ D}_a{ B}_x)+\nu_2({ B}_y^{\rm T}{ D}_a+{ D}_a{ B}_y)+{ W}^{\rm T}{ D}_a^{2}{ W},\label{atra}\\
	{\bf P}^{\rm T}{\bf P}&={ I}_{\hat{M}}+\nu_1\bar{a}({ B}_x^{\rm T}+{ B}_x)+\nu_2\bar{a}({ B}_y^{\rm T}+{ B}_y)+\bar{a}^2{ W}^{\rm T}{ W},\label{ptrp}
	\end{align}
	where ${ W}=\nu_1{ B}_x+\nu_2{ B}_y$. Rewrite ${ D}_{a}=\diag({ D}_{a,1},{ D}_{a,2},...,{ D}_{a,M_2})$ with \\${ D}_{a,i}=\diag(a_{1,i},a_{2,i},...,a_{M_1,i})$. Then, it is easy to see that
	${ B}_x^{\rm T}{ D}_a+{ D}_a{ B}_x=\diag({ H}_1,{ H}_2,...,{ H}_{M_2})$ with ${ H}_i=-\eta_x({ D}_{a,i}{ G}_{\alpha}+{ G}_{\alpha}^{\rm T}{ D}_{a,i})$. Denote $l_1(y)=\tilde{\mathcal{M}}_1(y)-\sqrt{2}(\hat{\mathcal{M}}_1(y)-\check{\mathcal{M}}_1(y))/\varsigma_{\alpha}$ and $s_1(y)=\tilde{\mathcal{M}}_1(y)+\sqrt{2}(\hat{\mathcal{M}}_1(y)-\check{\mathcal{M}}_1(y))/\varsigma_{\alpha}$  . Then, applying Theorem \ref{key-estimate} to ${\bf (i)}$--${\bf (iii)}$, we have
	\begin{equation*}
	-\check{c}_1\eta_x(G_{\alpha}+G_{\alpha}^{\rm T})\preceq l_1(y_i)\eta_x{ G}\preceq{ H}_i\preceq s_1(y_i)\eta_x{ G}\preceq-\hat{c}_1\eta_x(G_{\alpha}+G_{\alpha}^{\rm T}).
	\end{equation*}
	Therefore,
	\begin{equation}\label{xpartest}
	{\bf  O}\prec\check{c}_1({ B}_x^{\rm T}+{ B}_x)\preceq{ B}_x^{\rm T}{ D}_a+{ D}_a{ B}_x\preceq\hat{c}_1({ B}_x^{\rm T}+{ B}_x),
	\end{equation}
	where the first `$\prec$' is obvious.
	Recall the permutation matrix defined in \eqref{defpermut}. Denote $\tilde{ B}_y:={\bf Q}{ B}_y{\bf Q}^{\rm T}=-\eta_y{ I}_{M_1}\otimes{ G}_{\beta}$, $\tilde{ D}_a=\diag(\tilde{ D}_{a,1},\tilde{ D}_{a,2},...,\tilde{ D}_{a,M_1})$ with $\tilde{ D}_{a,i}=\diag(a_{i,1},a_{i,2},...,a_{i,M_2})$. Then, it is easy to check that
	\begin{align*}
	{ B}_y^{\rm T}{ D}_a+{ D}_a{ B}_y={\bf  Q}^{\rm T}(\tilde{ B}_y^{\rm T}\tilde{ D}_a+\tilde{ D}_a\tilde{ B}_y){\bf  Q}.
	\end{align*}
	Similarly to proof of \eqref{xpartest}, applying Theorem \ref{key-estimate} to ${\bf (i)}$, ${\bf (ii)}$ and ${\bf (iii)}$ yields
	\begin{align}\label{ypartest}
	{\bf  O}\prec\check{c}_2({ B}_y^{\rm T}+{ B}_y)=\check{c}_2{\bf  Q}^{\rm T}(\tilde{ B}_y^{\rm T}+\tilde{ B}_y){\bf  Q}\preceq{ B}_y^{\rm T}{ D}_a+{ D}_a{ B}_y&\preceq\hat{c}_2{\bf  Q}^{\rm T}(\tilde{ B}_y^{\rm T}+\tilde{ B}_y){\bf  Q}\notag\\
	&=\hat{c}_2({ B}_y^{\rm T}+{ B}_y).
	\end{align}
	Moreover, it is easy to see that
	\begin{equation}\label{commutpart}
	\check{a}^2{ W}^{\rm T}{ W}\preceq{ W}^{\rm T}{ D}_a^2{ W}\preceq\hat{a}^2{ W}^{\rm T}{ W}.
	\end{equation}
	By \eqref{xpartest}--\eqref{commutpart},
	\begin{align}
	{\bf  O}&\prec{ I}_{\hat{M}}+\nu_1\check{c}_1({ B}_x^{\rm T}+{ B}_x)+\nu_2\check{c}_2({ B}_y^{\rm T}+{ B}_y)+\check{a}^2{ W}^{\rm T}{ W}\notag\\
	&\preceq{\bf A}^{\rm T}{\bf A}\notag\\
	&\preceq{ I}_{\hat{M}}+\nu_1\hat{c}_1({ B}_x^{\rm T}+{ B}_x)+\nu_2\hat{c}_2({ B}_y^{\rm T}+{ B}_y)+\hat{a}^2{ W}^{\rm T}{ W}.\label{2datracontrl}
	\end{align}
	For any non-zero vector ${ y}\in\mathbb{R}^{M\times 1}$, denote ${ z}={\bf P}^{-1}{ y}$. Then, it holds
	\begin{equation*}
	\frac{{ y}^{\rm T}({\bf A}{\bf P}^{-1})^{\rm T}({\bf A}{\bf P}^{-1}){ y}}{{ y}^{\rm T}{ y}}=\frac{{ z}^{\rm T}{\bf A}^{\rm T}{\bf A}{ z}}{{ z}^{\rm T}{\bf P}^{\rm T}{\bf P}{ z}}.
	\end{equation*}
	By \eqref{2datracontrl},
	\begin{align}\label{nrangeesti1}
	0&<\frac{{ z}^{\rm T}[{ I}_{\hat{M}}+\nu_1\check{c}_1({ B}_x^{\rm T}+{ B}_x)+\nu_2\check{c}_2({ B}_y^{\rm T}+{ B}_y)+\check{a}^2{ W}^{\rm T}{ W}]{ z}}{{ z}^{\rm T}[{ I}_{\hat{M}}+\nu_1\bar{a}({ B}_x^{\rm T}+{ B}_x)+\nu_2\bar{a}({ B}_y^{\rm T}+{ B}_y)+\bar{a}^2{ W}^{\rm T}{ W}]{ z}}\notag\\
	&\leq\frac{{ z}^{\rm T}{\bf A}^{\rm T}{\bf A}{ z}}{{ z}^{\rm T}{\bf P}^{\rm T}{\bf P}{ z}}\notag\\
	&\leq\frac{{ z}^{\rm T}[{ I}_{\hat{M}}+\nu_1\hat{c}_1({ B}_x^{\rm T}+{ B}_x)+\nu_2\hat{c}_2({ B}_y^{\rm T}+{ B}_y)+\hat{a}^2{ W}^{\rm T}{ W}]{ z}}{{ z}^{\rm T}[{ I}_{\hat{M}}+\nu_1\bar{a}({ B}_x^{\rm T}+{ B}_x)+\nu_2\bar{a}({ B}_y^{\rm T}+{ B}_y)+\bar{a}^2{ W}^{\rm T}{ W}]{ z}}.
	\end{align}
	By Proposition \eqref{wghtsumbdlem} and \eqref{nrangeesti1},
	\begin{equation*}
	\check{s}\leq\min\left\{\frac{\check{c}_1}{\bar{a}},\frac{\check{c}_2}{\bar{a}},\frac{\check{a}^2}{\bar{a}^2}\right\}\leq\frac{{ z}^{\rm T}{\bf A}^{\rm T}{\bf A}{ z}}{{ z}^{\rm T}{\bf P}^{\rm T}{\bf P}{ z}}\leq\max\left\{\frac{\hat{c}_1}{\bar{a}},\frac{\hat{c}_2}{\bar{a}},\frac{\hat{a}^2}{\bar{a}^2}\right\}\leq\hat{s}.
	\end{equation*}
	During the proof above, there is no constraint on $M$ and $N$. Thus, for any $N\geq 1$, any $M\geq1$, $\Sigma^2({\bf A}{\bf P}^{-1})\subset[\check{s},\hat{s}]$ and $\sup\limits_{N,M\geq 1}\cond({\bf A}{\bf P}^{-1})\leq\sqrt{\hat{s}/\check{s}}$.
\end{proof}
\section{Numerical experiments}\label{numerical}
In this section, we test several examples to support theoretical results of Theorems \ref{convergence1D}, \ref{convergence2D-1} and to show the efficiency of the Toeplitz preconditioner. We employ generalized minimal residual (PGMRES) method with the Toeplitz preconditioner to solve \eqref{matrixform} and \eqref{2Dmatrixform}. We denote PGMRES method with Toeplitz preconditioner by PGMRES-{\bf T}. The stopping criterion for PGMRES-{\bf T} is set as $\frac{||{\bf r}_k||_2}{||{\bf r}_0||_2}\leq$1e-7, where ${\bf r}_k$ denotes the residual vector at $k$-th iteration.
Also, to illustrate the efficiency of PGMRES-{\bf T}, we compare it with the direct solver, pivoted LU factorization (PLU).
All numerical experiments are performed via MATLAB R2015a on a PC with the configuration: Intel(R) Core(TM) i7-4720 CPU 2.60 GHz and 8 GB RAM.

Recall that $h$ is the spatial step-size for one-dimensional discretization. We also set $h_1=h_2=h$ in two-dimensional discretization for the related experiments in this section. Define the error as
$$E(h,\tau)=\max_{0\leq n\leq N}\|e^n\|.$$ Then, the spatial and temporal convergence rates are measured as follows
$$Rate_h=\log_2\bigg(\dfrac{E(2h,\tau)}{E(h,\tau)}\bigg),\quad Rate_\tau=\log_2\bigg(\dfrac{E_2(h,2\tau)}{E(h,\tau)}\bigg).$$

Denote by CPU, the running time by unit seconds. Denote by 'iter', the average of iteration numbers of PGMRES method for the $N$ linear systems in \eqref{matrixform} or \eqref{2Dmatrixform}.
\begin{example}\label{ex1}{\rm
		Consider a one-dimensional OSFDE with $[x_L,x_R]=[0,1]$, $T=1$ and
		\begin{align*}
		&d(x)=\cos(\pi x/2)+0.1,\\
		&f(x,t)=192x^3(1-x)^3t^2-2^6t^3d(x)\sum\limits_{k=3}^{6}\frac{\binom{3}{k-3}k!x^{k-\alpha}}{(-1)^{k-1}\Gamma(k+1-\alpha)}.
		\end{align*}
		The explicit expression of exact solution for the example is $u(x,t)=2^6x^3(1-x)^3t^3$.
		
		We employ both PGMRES-{\bf T} and PLU to solve the linear systems \eqref{1dlinsys} arising from Example \ref{ex1}, the results of which are listed in Tables \ref{ex1fixedN}--\ref{ex1fixedM}.
		
		From Tables \ref{ex1fixedN}--\ref{ex1fixedM}, we see that the CPU cost of PGMRES-{\bf T} is much less than that of PLU solver while the error, $E(h,\tau)$ of the two solvers are almost the same, which demonstrate the efficiency of the Toeplitz preconditioner. Also, as $\tau$ or $h$ changes in Tables \ref{ex1fixedN}--\ref{ex1fixedM}, the iteration number of PGMRES-{\bf T} varies slightly, which shows a linear convergence of PGMRES-{\bf T}. Moreover, the temporal convergence rate, $Rate_h$ and the spatial convergence rate,$Rate_{\tau}$ from Tables \ref{ex1fixedN}--\eqref{ex1fixedM} are always close to 2, which supports the theoretical result of Theorem \ref{convergence1D}.
		\begin{table}[H]
			\begin{center}
				\caption{Numerical results for Example \ref{ex1} when $\tau=2^{-10}$.				
				}\label{ex1fixedN}
				\setlength{\tabcolsep}{0.6em}
				\begin{tabular}[c]{cc|cccc|ccc}
					\hline
					~&~& \multicolumn{4}{c|}{PGMRES-{\bf T}}& \multicolumn{3}{c}{PLU}\\
					\cline{3-9}
					$\alpha$&$h$&$\mathrm{iter}$&CPU&$E(h,\tau)$&$Rate_h$&CPU&$E(h,\tau)$&$Rate_h$\\
					\hline
					1.2       &$2^{-8}$ &2.1  &3.12s  &3.47e-5      &--      &48.40s   &3.49e-5&--      \\
					~         &$2^{-9}$ &2.2  &6.27s  &8.42e-6      &2.04    &121.34s  &8.60e-6&2.02    \\
					~         &$2^{-10}$&2.3  &12.07s &1.87e-6      &2.17    &327.90s  &1.99e-6&2.11    \\
					\hline
					1.5       &$2^{-8}$ &3.4  &4.12s  &3.18e-5      &--      &48.79s   &3.18e-5&--      \\
					~         &$2^{-9}$ &3.6  &7.89s  &7.82e-6      &2.03    &123.67s  &7.81e-6&2.02    \\
					~         &$2^{-10}$&3.9  &15.31s &1.66e-6      &2.23    &350.71s  &1.81e-6&2.11    \\
					\hline
					1.8       &$2^{-8}$ &4.5  &5.11s  &2.51e-5      &--      &53.24s   &2.50e-5&--      \\
					~         &$2^{-9}$ &4.7  &9.51s  &6.23e-6      &2.01    &138.52s  &6.12e-6&2.03    \\
					~         &$2^{-10}$&4.9  &17.37s &1.57e-6      &1.99    &350.04s  &1.40e-6&2.13    \\
					\hline			
				\end{tabular}
			\end{center}
		\end{table}
		
		\begin{table}[H]
			\begin{center}
				\caption{Numerical results for Example \ref{ex1} when $\tau=2^{-11}$.				
				}\label{ex1fixedM}
				\setlength{\tabcolsep}{0.6em}
				\begin{tabular}[c]{cc|cccc|ccc}
					\hline
					~&~& \multicolumn{4}{c|}{PGMRES-{\bf T}}& \multicolumn{3}{c}{PLU}\\
					\cline{3-9}
					$\alpha$&$\tau$&$\mathrm{iter}$&CPU&$E(h,\tau)$&$Rate_{\tau}$&CPU&$E(h,\tau)$&$Rate_{\tau}$\\
					\hline
					1.2       &$2^{-7}$&10.6 &4.81s  &2.01e-5      &--      &120.74s  &2.01e-5&--      \\
					~         &$2^{-8}$&6.7  &5.63s  &4.80e-6      &2.07    &252.02s  &4.74e-6&2.08    \\
					~         &$2^{-9}$&3.8  &8.20s  &1.16e-6      &2.05    &493.55s  &9.68e-7&2.29    \\
					\hline
					1.5       &$2^{-7}$&12.4 &4.47s  &2.17e-5      &--      &124.21s  &2.17e-5&--      \\
					~         &$2^{-8}$&9.5  &6.30s  &5.19e-6      &2.06    &252.72s  &5.19e-6&2.06    \\
					~         &$2^{-9}$&6.5  &9.27s  &1.08e-6      &2.27    &486.69s  &1.12e-6&2.22    \\
					\hline
					1.8       &$2^{-7}$&12.6 &4.55s  &2.34e-5      &--      &119.27s  &2.34e-5&--      \\
					~         &$2^{-8}$&10.1 &6.47s  &5.73e-6      &2.03    &238.38s  &5.68e-6&2.04    \\
					~         &$2^{-9}$&7.4  &9.67s  &1.45e-6      &1.98    &478.66s  &1.28e-6&2.15    \\
					\hline			
				\end{tabular}
			\end{center}
		\end{table}
	}
\end{example}

\begin{example}\label{ex2}
	{\rm
		Consider a two-dimensional OSFDE with $ [x_L,x_R]=[y_L,y_R]=[0,2]$, $T=1$ and
		\begin{align*}
		&d(x,y)=x^2+y^2+20,\quad e(x,y)=\sin\left[ \frac{\pi}{24}(x+4)\right]+\sin\left[ \frac{\pi}{24}(y+4)\right],\\
		&f(x,y,t)=3x^4(2-x)^4y^4(2-y)^4t^2-t^3y^4(2-y)^4d(x,y)\sum\limits_{k=4}^{8}\frac{\binom{4}{k-4}2^{8-k}k!x^{k-\alpha}}{(-1)^k\Gamma(k+1-\alpha)}\\
		&\qquad\qquad\qquad\qquad\qquad\qquad\qquad\quad~-t^3x^4(2-x)^4e(x,y)\sum\limits_{k=4}^{8}\frac{\binom{4}{k-4}2^{8-k}k!y^{k-\beta}}{(-1)^k\Gamma(k+1-\beta)}.
		\end{align*}
		The explicit expression of exact solution for the example is $u(x,y,t)=x^4(2-x)^4y^4(2-y)^4t^3$.
		
		We employ both PGMRES-{\bf T} and PLU to solve the linear systems \eqref{2dlinsys} arising from Example \ref{ex2}, the results of which are listed in Tables \ref{ex2fixedN}--\ref{ex2fixedM}.
		
		Tables \ref{ex2fixedN}--\ref{ex2fixedM} shows that the CPU cost of PGMRES-{\bf T} is much less than that of PLU, which demonstrates the efficiency of the Toeplitz preconditioner in two-dimensional case. Again, the iteration number of PGMRES-{\bf T} changes slightly as $\tau$ or $h$ changes in Tables \ref{ex2fixedN}--\ref{ex2fixedM}, which shows a linear convergence of PGMRES-{\bf T}. Moreover, the convergence rates, $Rate_h$ and $Rate_{\tau}$ shown in Tables \ref{ex2fixedN}--\ref{ex2fixedM} are close to 2, which coincides with the theoretical result of Theorem \ref{convergence2D-1}.
		\begin{table}[H]
			\begin{center}
				\caption{Numerical results for Example \ref{ex2} when $\tau=2^{-7}$.				
				}\label{ex2fixedN}
				\setlength{\tabcolsep}{0.5em}
				\begin{tabular}[c]{cc|cccc|ccc}
					\hline
					~&~& \multicolumn{4}{c|}{PGMRES-{\bf T}}& \multicolumn{3}{c}{PLU}\\
					\cline{3-9}
					$(\alpha,\beta)$&$h$&$\mathrm{iter}$&CPU&$E(h,\tau)$&$Rate_h$&CPU&$E(h,\tau)$&$Rate_h$\\
					\hline
					(1.01,1.09) &$2^{-4}$ &4.0  &6.63s  &3.38e-3      &--      &35.73s   &3.38e-3&--      \\
					~           &$2^{-5}$ &4.1  &9.80s  &8.14e-4      &2.05    &234.97s  &8.14e-4&2.05    \\
					~           &$2^{-6}$ &4.1  &16.19s &1.90e-4      &2.10    &5017.20s &1.90e-4&2.10    \\
					\hline
					(1.5,1.3) &$2^{-4}$ &4.4  &7.43s  &3.00e-3      &--      &34.20s   &3.00e-3&--      \\
					~         &$2^{-5}$ &4.6  &10.62s &7.34e-4      &2.03    &219.68s  &7.34e-4&2.03    \\
					~         &$2^{-6}$ &5.8  &20.95s &1.72e-4      &2.09    &4875.58s &1.72e-4&2.09    \\
					\hline
					(1.5,1.6) &$2^{-4}$ &4.9  &7.59s  &3.00e-3      &--      &34.00s   &3.00e-3&--      \\
					~         &$2^{-5}$ &5.8  &12.71s &7.41e-4      &2.03    &220.50s  &7.41e-4&2.03    \\
					~         &$2^{-6}$ &6.8  &23.99s &1.73e-4      &2.09    &4769.31s &1.73e-4&2.10    \\
					\hline
					(1.5,1.9) &$2^{-4}$ &5.8  &8.75s  &3.00e-3      &--      &33.81s   &3.00e-3&--      \\
					~         &$2^{-5}$ &6.7  &14.47s &7.33e-4      &2.03    &219.61s  &7.33e-4&2.03    \\
					~         &$2^{-6}$ &7.4  &25.85s &1.71e-4      &2.10    &4746.60s &1.71e-4&2.10    \\
					\hline
					(1.2,1.2) &$2^{-4}$ &4.1  &6.63s  &3.30e-3      &--      &33.50s   &3.30e-3&--      \\
					~         &$2^{-5}$ &4.2  &9.71s  &8.03e-4      &2.03    &217.33s  &8.03e-4&2.03    \\
					~         &$2^{-6}$ &4.6  &17.13s &1.89e-4      &2.09    &4653.91s &1.89e-4&2.09    \\
					\hline
					(1.5,1.5) &$2^{-4}$ &4.5  &7.13s  &3.00e-3      &--      &33.71s   &3.00e-3&--      \\
					~         &$2^{-5}$ &5.4  &12.09s &7.40e-4      &2.03    &218.36s  &7.40e-4&2.03    \\
					~         &$2^{-6}$ &6.5  &23.10s &1.73e-4      &2.10    &4663.71s &1.73e-4&2.10    \\
					\hline
					(1.8,1.8) &$2^{-4}$ &6.2  &9.22s  &2.30e-3      &--      &33.73s   &2.30e-3&--      \\
					~         &$2^{-5}$ &7.2  &15.40s &5.71e-3      &2.04    &218.98s  &5.71e-4&2.04    \\
					~         &$2^{-6}$ &8.3  &28.54s &1.31e-4      &2.12    &4841.52s &1.31e-4&2.12    \\
					\hline			
				\end{tabular}
			\end{center}
		\end{table}
		
		\begin{table}[H]
			\begin{center}
				\caption{Numerical results for Example \ref{ex2} when $h=2^{-6}$.				
				}\label{ex2fixedM}
				\setlength{\tabcolsep}{0.5em}
				\begin{tabular}[c]{cc|cccc|ccc}
					\hline
					~&~& \multicolumn{4}{c|}{PGMRES-{\bf T}}& \multicolumn{3}{c}{PLU}\\
					\cline{3-9}
					$(\alpha,\beta)$&$\tau$&$\mathrm{iter}$&CPU&$E(h,\tau)$&$Rate_h$&CPU&$E(h,\tau)$&$Rate_h$\\
					\hline
					(1.01,1.09) &$2^{-3}$ &219.1&62.13s &6.77e-3      &--      &334.49s  &6.77e-3&--      \\
					~           &$2^{-4}$ &82.9 &43.48s &1.61e-3      &2.07    &636.48s  &1.61e-3&2.07    \\
					~           &$2^{-5}$ &33.1 &30.14s &3.48e-4      &2.21    &1249.74s &3.48e-4&2.21    \\
					\hline
					(1.5,1.3) &$2^{-3}$ &14.5 &3.50s  &6.80e-3      &--      &338.66s  &6.80e-3&--      \\
					~         &$2^{-4}$ &12.7 &5.71s  &1.60e-3      &2.07    &612.05s  &1.60e-3&2.07    \\
					~         &$2^{-5}$ &10.2 &9.00s  &3.50e-4      &2.21    &1226.04s &3.50e-4&2.21    \\
					\hline
					(1.5,1.6) &$2^{-3}$ &12.3 &3.01s  &6.80e-3      &--      &331.95s  &6.80e-3&--      \\
					~         &$2^{-4}$ &11.3 &5.16s  &1.60e-3      &2.07    &644.72s  &1.60e-3&2.07    \\
					~         &$2^{-5}$ &10.1 &8.96s  &3.45e-4      &2.23    &1220.89s &3.45e-4&2.23    \\
					\hline
					(1.5,1.9) &$2^{-3}$ &11.0 &2.73s  &6.80e-3      &--      &336.93s  &6.80e-3&--      \\
					~         &$2^{-4}$ &10.3 &4.77s  &1.60e-3      &2.07    &638.47s  &1.60e-3&2.07    \\
					~         &$2^{-5}$ &9.5  &8.42s  &3.46e-4      &2.23    &1217.05s &3.46e-4&2.23    \\
					\hline
					(1.2,1.2) &$2^{-3}$ &15.4 &3.63s  &6.80e-3      &--      &338.42s  &6.80e-3&--      \\
					~         &$2^{-4}$ &12.7 &5.70s  &1.60e-3      &2.07    &629.27s  &1.60e-3&2.07    \\
					~         &$2^{-5}$ &10.2 &9.01s  &3.47e-4      &2.22    &1238.65s &3.47e-4&2.22    \\
					\hline
					(1.5,1.5) &$2^{-3}$ &13.4 &3.23s  &6.80e-3      &--      &334.47s  &6.80e-3&--      \\
					~         &$2^{-4}$ &12.1 &5.47s  &1.60e-3      &2.07    &646.24s  &1.60e-3&2.07    \\
					~         &$2^{-5}$ &10.5 &9.19s  &3.46e-4      &2.22    &1223.25s &3.46e-4&2.22    \\
					\hline
					(1.8,1.8) &$2^{-3}$ &12.3 &3.00s  &6.80e-3      &--      &335.93s  &6.80e-3&--      \\
					~         &$2^{-4}$ &11.5 &5.23s  &1.60e-3      &2.06    &625.35s  &1.60e-3&2.06    \\
					~         &$2^{-5}$ &10.6 &9.30s  &3.58e-4      &2.20    &1221.17s &3.58e-4&2.20    \\
					\hline			
				\end{tabular}
			\end{center}
		\end{table}	
	}
\end{example}


\section{Concluding remarks}\label{Concluding}

We study second-order schemes for time-dependent one- and two-dimensional OSFDEs with variable diffusion coefficients, in which implicit Crank-Nicolson scheme and WSGD formula are employed to discretize the temporal and the spatial derivatives, respectively. Theoretically, we have established the unconditional stability and second-order convergence for the one-dimensional scheme without additional assumption, and for the two-dimensional scheme with certain assumptions on diffusion coefficients presented in Corollaries \ref{case1lemma}--\ref{case2lemma}. To accelerate the solution process, Toeplitz preconditioners have been proposed for both one- and two-dimensional schemes. The condition numbers of the preconditioned matrices have been proven to be bounded by a constant independent of discretization step-sizes under certain assumptions on the diffusion coefficients presented in Theorems \ref{1dcondesti}, \ref{1dcondesti1}, \ref{2dclstthm}. Numerical results reported have shown the second-order convergence rate of the proposed schemes and the efficiency of the proposed preconditioners.

\end{document}